\numberwithin{equation}{section}
\theoremstyle{plain} 
\newtheorem{theorem}{Theorem}[section]
\newtheorem{corollary}[theorem]{Corollary}
\newtheorem{lemma}[theorem]{Lemma}
\newtheorem{proposition}[theorem]{Proposition}
\theoremstyle{definition} 
\newtheorem{definition}[theorem]{Definition}
\theoremstyle{definition} 
\newtheorem*{ex*}{Example}
\theoremstyle{remark} 
\theoremstyle{remark} 
\newtheorem{remark}[theorem]{Remark}
\newtheorem*{remark*}{Remark}
\numberwithin{equation}{section}
\newcommand{\beqa}{\begin{eqnarray}}
\newcommand{\eeqa}{\end{eqnarray}}
\newcommand{\bseq}{\begin{subequations}}
\newcommand{\eseq}{\end{subequations}}
\newcommand{\XX}{\mathbf{X}}
\newcommand{\xx}{\mathbf{x}}
\newcommand{\yy}{\mathbf{y}}
\newcommand{\zz}{\mathbf{z}}
\newcommand{\s}{\mathbf{s}}
\newcommand{\uu}{\mathbf{u}}
\newcommand{\vv}{\mathbf{v}}
\newcommand{\ww}{\mathbf{w}}
\renewcommand{\aa}{\mathbf{a}}
\newcommand{\bb}{\mathbf{b}}
\newcommand{\ee}{\mathbf{e}}
\newcommand{\0}{\mathbf{0}}
\newcommand{\one}{\mathbf{1}}
\newcommand{\dd}{\partial}
\newcommand{\fb}{\fbox{\rule{0pt}{5pt}\hspace*{-5pt}\rule{0pt}{5pt}}}
\newcommand{\no}[1]{\fb\,#1\,\fb\,}
\renewcommand{\dd}{{\,\operatorname{d}}}
\newcommand{\sign}{\operatorname{sign}}
\newcommand{\mes}{\operatorname{mes}}
\newcommand{\are}{\operatorname{ARE}}
\newcommand{\ssim}{\text{\raisebox{3pt}[0pt][0pt]{$\sim$}}}
\newcommand{\lesim}{\underset{\ssim}{<}}
\newcommand{\gesim}{\underset{\ssim}{>}}
\newcommand{\lsim}{\mathrel{\text{\raisebox{2pt}{$\lesim$}}}}
\newcommand{\gsim}{\mathrel{\text{\raisebox{2pt}{$\gesim$}}}}
\newcommand{\al}{\alpha}
\newcommand{\g}{\gamma}
\newcommand{\si}{\sigma}
\newcommand{\Si}{\Sigma}
\newcommand{\la}{\lambda}
\newcommand{\de}{\delta}
\newcommand{\be}{\beta}
\newcommand{\vpi}{\varphi}
\renewcommand{\th}{\theta}
\newcommand{\thb}{{\boldsymbol{\theta}}}
\newcommand{\ii}[1]{\operatorname{I}\{#1\}} 
\newcommand{\bigii}[1]{\operatorname{I}\big\{#1\big\}}
\newcommand{\PP}{\operatorname{\mathsf{P}}} 
\newcommand{\E}{\operatorname{\mathsf{E}}}
\newcommand{\Cov}{\operatorname{\mathsf{Cov}}}
\newcommand{\R}{\mathbb{R}}
\newcommand{\ZZ}{\mathbf{Z}}
\newcommand{\A}{\mathcal{A}}
\newcommand{\vp}{\varepsilon}
\renewcommand{\le}{\leqslant}
\renewcommand{\ge}{\geqslant}
\renewcommand{\leq}{\leqslant}
\renewcommand{\geq}{\geqslant}
\begin{document}

\begin{frontmatter}

\title{Schur${}^2$-concavity properties of Gaussian measures, with applications to hypotheses testing}
\runtitle{Schur${}^2$-concavity and testing
}
\date{\today}

\begin{aug}
\author{\fnms{Iosif} \snm{Pinelis}\ead[label=e1]{ipinelis@mtu.edu}}
\runauthor{Iosif Pinelis}


\address{Department of Mathematical Sciences\\
Michigan Technological University\\
Houghton, Michigan 49931, USA\\
E-mail: \printead[ipinelis@mtu.edu]{e1}
}
\end{aug}

\begin{abstract}
The main results imply that the probability $\PP(\ZZ\in A+\thb)$ is Schur-concave/Schur-convex in $(\th_1^2,\dots,\th_k^2)$ provided that the indicator function of a set $A$ in $\R^k$ is so, respectively; here, $\thb=(\th_1,\dots,\th_k)\in\R^k$ and $\ZZ$ is a standard normal random vector in $\R^k$.  
Moreover, it is shown that the Schur-concavity/Schur-convexity is strict unless the set $A$ is equivalent to a spherically symmetric set.  
Applications to testing hypotheses on multivariate means are given. 
\end{abstract}

\begin{keyword}[class=AMS]
\kwd[Primary ]{60E15}
\kwd{62H15}
\kwd{62F05}
\kwd{62G20}
\kwd[; secondary ]{60D05}
\kwd{62E15}
\kwd{62E20}
\kwd{60D05}
\kwd{51F15}
\kwd{53C65}
\end{keyword}
\begin{keyword}
\kwd{probability inequalities}
\kwd{geometric probability}
\kwd{Gaussian measures}
\kwd{multivariate normal distribution}
\kwd{mixtures}
\kwd{majorization}
\kwd{stochastic ordering}
\kwd{Schur convexity}
\kwd{hypothesis testing}
\kwd{asymptotic properties of tests}
\kwd{asymptotic relative efficiency}
\kwd{$p$-mean tests}
\kwd{multivariate means}
\kwd{reflection groups}
\end{keyword}

%
%

\end{frontmatter}

\settocdepth{chapter}

\tableofcontents 

\settocdepth{subsubsection}

\theoremstyle{plain} 
\numberwithin{equation}{section}

\section{Introduction}\label{intro} 
Consider the classical problem of testing for a multivariate mean (say $\thb\in\R^k$) given a large sample of (say) independent identically distributed observations. Suppose that, under the null hypothesis $H_0\colon\thb=\0$, the covariance matrix of the underlying distribution is known and nonsingular, and thus without loss of generality may be assumed to coincide with the identity matrix.   
Then, by the central limit theorem, the power of the approximate likelihood ratio test (LRT) on an alternative mean vector $\thb$ is $\PP(\|\ZZ-\thb\|>c)$, where $\|\cdot\|$ is the Euclidean norm, $\ZZ$ is a standard normal random vector in $\R^k$, and $c$ is the critical value. It is of interest to compare the asymptotic efficiency of the approximate LRT with that of other tests. For instance, one can consider tests based -- instead of the Euclidean norm $\|\cdot\|$ -- on the general $\ell_p$-norm $\|\cdot\|_p$ on $\R^k$, according to the definition \label{ell_p} $\|\xx\|_p:=(|x_1|^p+\cdots+|x_k|^p)^{1/p}$ for $\xx=(x_1,\dots,x_k)\in\R^k$; let us refer to such tests as the  $p$-tests, for brevity;  
then the approximate LRT is a special case of a $p$-test, with $p=2$. 
The asymptotic relative efficiency, say $\are_{p,2}$, of the $p$-test relative to the $2$-test will depend on the direction of the alternative mean vector $\thb$, so that one can write $\are_{p,2}=\are_{p,2,\uu}$, where $\uu$ is (say) the $\|\cdot\|_2$-unit vector in the direction of $\thb$. 
Therefore, it turns out quite useful to have monotonicity properties of the approximate power $\PP(\|\ZZ-\thb\|_p>c)$ for a varying direction of the vector $\thb$. 

The main results of the present paper -- Theorem~\ref{th:schur} and Corollary~\ref{cor:schur} -- imply that $\PP(\ZZ\in A+\thb)$ is Schur-concave or Schur-convex in $(\th_1^2,\dots,\th_k^2)$ provided that the indicator function of a set $A$ in $\R^k$ is so, respectively; of course, the $\th_j$'s denote the coordinates of $\thb$; moreover, we show that, for such a set $A$, the probability $\PP(\XX\in A+\thb)$ will be \emph{strictly} Schur-concave/Schur-convex in $(\th_1^2,\dots,\th_k^2)$ unless the set $A$ is equivalent (up to a set of Lebesgue measure $0$) to a spherically symmetric set.  
\big(For brevity, we shall refer to the Schur-concavity/Schur-convexity in $(\th_1^2,\dots,\th_k^2)$ as the Schur${}^2$-concavity/Schur${}^2$-convexity in $\thb$.\big)  

Note that the power $\PP(\|\ZZ-\thb\|_p>c)$ can be written as $\PP(\ZZ\in A_{p,c}+\thb)$, where $A_{p,c}:=\{\xx\in\R^k\colon\|\xx\|_p>c\}$; moreover, one can see that the indicator of $A_{p,c}$ is Schur${}^2$-concave for $p\ge2$ and Schur${}^2$-convex for $p\le2$. 
Based on these facts, it can be shown (see Corollary~\ref{cor:are schur2} in this paper) 
that $\are_{p,2,\uu}$ is Schur${}^2$-convex in $\uu$ for each $p\ge2$ and Schur${}^2$-concave in $\uu$ for each $p\le2$. Informally, this means that, for each $p\ge2$, $\are_{p,2,\uu}$ is greater when the direction of the alternative vector $\thb$ is closer to that of one of the 
$2k$ ``coordinate'' vectors $(0,\dots,0,\pm1,0,\dots,0)\in\R^k$, whereas, for each $p\le2$, $\are_{p,2,\uu}$ is greater when the direction of the alternative vector $\thb$ is closer to that of one of the $2^k$ ``diagonal'' vectors $(\pm1,\dots,\pm1)\in\R^k$. 

An application of these results is as follows. It is not hard to show (see the proof of Corollary~\ref{cor:are<1} in this paper) that, for $p\le2$ and $\uu$ in the direction of the ``diagonal'' vector $\one:=(1,\dots,1)\in\R^k$, the upper limit of $\are_{p,2,\uu}$ is no greater than $1$ as both types of error probabilities (say $\al$ and $1-\be$) tend to $0$; thus, by the Schur${}^2$-concavity of $\are_{p,2,\uu}$ in $\uu$, the same holds for any direction $\uu$. Similarly, for $p\ge2$ and $\uu$ in the direction of the ``coordinate'' vector $\ee_1:=(1,0,\dots,0)\in\R^k$, the upper limit of $\are_{p,2,\uu}$ is no greater than $1$ as $\al,1-\be\to0$; thus, the same holds for any direction $\uu$. So, the approximate LRT asymptotically outperforms all the competing $p$-tests as $\al,1-\be\to0$. 

The situation turns out drastically different when the error probabilities $\al$ and $1-\be$ are fixed, whereas the dimension $k$ grows to $\infty$. 
Then, for each $p>2$, the values of $\are_{p,2,\uu}$ can be arbitrarily large if the direction of $\uu$ stays close enough to the ``coordinate'' one, whereas $\are_{p,2,\uu}$ is well bounded away from $0$ even for the worst possible, ``diagonal'' directions $\uu$ \cite{d_to_infty}. 
For instance, for $p=3$ the possible values of $\are_{p,2,\uu}$ range from $\approx0.96$ to $\infty$ as the direction of $\uu$ varies from the ``diagonal'' to ``coordinate'' ones.   
This suggests that (say) the $3$-test should generally be preferred to the LRT for large dimensions $k$ -- especially, when the direction of the alternative vector $\thb$ is unknown or, even more so, when the direction of $\thb$ is known to be far from the ``diagonal'' one. One can say that the $3$-test is much more robust than the LRT with respect to a few large coordinates of an alternative mean vector $\thb$, while the asymptotic efficiency of the $3$-test relative to the LRT never falls below $\approx96\%$.

The main results of this paper, mentioned above, are somewhat similar in form to the classic result by  
Marshall and Olkin \cite[Th.\ 2.1]{marsh-olkin74}: If a random vector $\XX$ in $\R^k$ has a Schur-concave density and $A$ is a Schur-convex set in $\R^k$ (that is, the indicator of $A$ is Schur-concave), then $\PP(\XX\in A+\xx)$ is Schur-concave in $\xx\in\R^k$. 
The proof in \cite{marsh-olkin74} was obtained by a clever reduction to another classic result, due to Anderson \cite{anderson55}, 
which might at the first glance seem to be of rather different form: 

Let a random vector $\XX$ in $\R^k$ have a nonnegative even density $f$, which is unimodal in the sense that the sets $\{\xx\in\R^k\colon f(\xx)\ge u\}$ are convex for all real $u$. 
Let $A$ be a convex set in $\R^k$, symmetric about the
origin. 
Then $\PP(\XX\in A+\la\xx)$ 
is non-increasing in $\la\ge0$ for any given $\xx\in\R^k$. 
In turn, this result of \cite{anderson55} is proved by noticing that without loss of generality $f$ may be assumed to be an indicator function and then using the Brunn-Minkowski inequality for the volumes of convex bodies.   

Another result, sounding similarly to Marshall and Olkin's, 
is due to Jogdeo \cite[Th.\ 2.2]{jog77}: Let a random vector $\XX$ in $\R^k$ have an axially unimodal density $f$ and let $A$ be an axially convex set. Then $\PP(\XX\in A+\xx)$ is axially unimodal in $\xx\in\R^k$. 	
The proof in \cite{jog77} is also obtained based on the observation that the density $f$ may without loss of generality be assumed to be an indicator function. 

The particular case of our Corollary~\ref{cor:schur} when the set $A$ is a $k$-dimensional cube of the form $[-a,a]^k$ was obtained by Hall, Kanter, and Perlman (HKP) \cite{HKP}. The HKP result was complemented by the Mathew and Nordstr\"om \cite{mathew} study of the probability content of a rotated ellipse; our result appears to be independent from that of \cite{mathew} in the sense that neither of these two results helps one to obtain the other. 

In contrast with mentioned resuts of \cite{marsh-olkin74,jog77}, our Theorem~\ref{th:schur} and Corollary~\ref{cor:schur} cannot be reduced to  densities which are the indicator functions of ``nice'' sets (see Proposition~\ref{prop:counterex}). No such reduction is possible even for the HKP result, as pointed out in the penultimate paragraph on page~810 in \cite{HKP}. However, what facilitates the proof in \cite{HKP} is that, in the case when $A$ is the $k$-cube $[-a,a]^k$, the probability $\PP(\ZZ\in A+\thb)$ is the product of ``one-dimensional'' probabilities $\PP(Z_j\in[-a,a]+\th_j)$ over $j=1,\dots,k$. 
On the other hand, we have to deal with the fact that, for a general set $A$ with a Schur${}^2$-concave/Schur${}^2$-convex indicator, there is no such nice factorization property. 
One may also note that the HKP method is purely analytic (based on log-convexity properties for Laplace and related transforms, cf.\ \cite{pin99}), whereas the key idea in this paper is based on group symmetry. 


For general treatises of related problems, we refer to the well-known monographs by Marshall and Olkin \cite{marsh-ol}, Dharmadhikari and Joag-Dev \cite{dharm-joag88}, and Tong \cite{tong}. 

The paper is organized as follows. The results are stated and discussed in Section~\ref{results}: general results on the Schur${}^2$-convexity/concavity of Gaussian measures in Subsection~\ref{general} and applications to testing for multivariate means in Subsection~\ref{means}. The proofs are deferred to Section~\ref{proofs}.

\section{Results}\label{results}
\subsection{General results on the Schur${}^2$-concavity of Gaussian measures}\label{general}
Recall that, for any $E\subseteq\R^k$, a function $f\colon E\rightarrow[0,\infty]$ is referred to as Schur-concave if it reverses the
Schur majorization: for any $\mathbf{a}$ and $\mathbf{b}$ in $E$ such that $%
\mathbf{a\succeq b}$, one has $f\left( \mathbf{a}\right) \leq%
f\left( \mathbf{b}\right) $. Recall also the definition of the
Schur majorizarion: for $\mathbf{a:=}\left( a_{1},\ldots,a_{k}\right) $ and $%
\mathbf{b:=}\left( b_{1},\ldots,b_{k}\right) $ in $\R^k$, $\mathbf{%
a\succeq b}$ (or, equivalently, 
$\mathbf{%
b\preceq a}$)
means that $a_{1}+\cdots+a_{k}=b_{1}+\cdots+b_{k}$ and $a_{%
\left[ 1\right] }+\cdots+a_{\left[ j\right] }\geq b_{\left[ 1\right]
}+\cdots+b_{\left[ j\right] }$ for all $j\in\left\{ 1,\ldots,k\right\} $,
where $a_{\left[ 1\right] }\geq\cdots\geq a_{\left[ k\right] }$ are the
ordered numbers $a_{1},\ldots,a_{k}$, from the largest to the smallest. 
As
usual, let $\mathbf{a}_{\downarrow}:=\left( a_{\left[ 1\right] },\ldots,a_{%
\left[ k\right] }\right) $. If $\mathbf{a}\succeq\mathbf{b}$ and $\mathbf{a}%
_{\downarrow}\neq\mathbf{b}_{\downarrow}$, let us write $\mathbf{a}\succ%
\mathbf{b}$. 
If for all $\mathbf{a}$ and $\mathbf{b}$ in $E$ such that $%
\mathbf{a\succ b}$, one has $f\left( \mathbf{a}\right) <%
f\left( \mathbf{b}\right) $ or $f\left( \mathbf{a}\right) =
f\left( \mathbf{b}\right) =\infty$, let us say that the function $f\colon E\rightarrow[0,\infty]$ is strictly Schur-concave.

\begin{definition}\label{def:2conc}
Let us say that a Lebesgue-measurable function $f\colon\R^k\rightarrow[0,\infty]$ is (strictly) Schur${}^2$-concave if $f(\xx)=f(x_1,\dots,x_k)$ is (strictly) Schur-concave in $(x_1^2,\dots,x_k^2)$, that is, if there exists a (strictly) Schur-concave function $g\colon[0,\infty)^k\to[0,\infty]$ such that $f(x_1,\dots,x_k)=g(x_1^2,\dots,x_k^2)$ for all $(x_1,\dots,x_k)\in\R^k$. 

For brevity, let $\xx^2:=(x_1^2,\dots,x_k^2)$ for any $\xx=(x_1,\dots,x_k)\in\R^k$. 

Let us then say that a set $A\subseteq\R^k$ is Schur${}^2$-convex if its indicator function is  Schur${}^2$-concave; that is, if the conditions $\xx\in A$, $\yy\in\R^k$, and $\yy^2\preceq\xx^2$ imply $\yy\in A$. Say that $A\subseteq\R^k$ is Schur${}^2$-concave if the complement $\R^k\setminus A$ is Schur${}^2$-convex; that is, if the conditions $\xx\in A$, $\yy\in\R^k$, and $\yy^2\succeq\xx^2$ imply $\yy\in A$. 
\end{definition}

\begin{remark}\label{rem:symm}
Any Schur${}^2$-concave function on $\R^k$ and, hence, any Schur${}^2$-convex set in $\R^k$ are invariant with respect to the linear group, say $G_k$, generated by the reflections $(x_1,\dots,x_k)\mapsto(x_1,\dots,x_{i-1},-x_i,x_{i+1},\dots,x_k)$ and 
$(x_1,\dots,x_k)\mapsto(x_1,\dots,x_{i-1},x_j,x_{i+1},\dots,x_{j-1},x_i,x_{j+1},\dots,x_k)$ about the coordinate hyperplanes $\{(x_1,\dots,x_k)\in\R^k\colon x_i=0\}$ ($i=1,\dots,k$) and 
the diagonal hyperplanes $\{(x_1,\dots,x_k)\in\R^k\colon x_i=x_j\}$ ($i,j=1,\dots,k$, $i<j$).  
One may recognize $G_k$ is the group of symmetries of the hypercube (or, equivalently, of the cross polytope) in $\R^k$; its order is $2^k k!$, as each member of the group is the product of a permutation of the $k$ coordinates  and up to $k$ sign switches; see e.g.\ \cite[pages~130--133]{coxeter}. 
In particular, the group $G_2$ 
is known as a dihedral group and denoted as $D_4$ or $D_8$; it consists of $8$ linear transformations, mapping each point $(u,v)\in\R^2$ to $(u,v)$, $(-u,v)$, $(u,-v)$, $(-u,-v)$, $(v,u)$, $(-v,u)$, $(v,-u)$, and $(-v,-u)$, respectively. 
\end{remark}

%

A trivial example of functions that are Schur${}^2$-concave but not strictly Schur${}^2$-concave is that of the \emph{spherically symmetric} functions, that is, the measurable functions that are constant on each sphere $S_r:=\{\xx\in\R^k\colon\|\xx\|=r\}$, for each $r\in(0,\infty)$.  
(As usual, $\|\xx\|:=(x_1^2+\dots+x_k^2)^{1/2}$, the Euclidean norm of a vector $\xx\in\R^k$. Also, we shall denote vectors in $\R^k$ by boldfaced letters and their coordinates, by the corresponding italicized, not boldfaced letters with indices.) Similarly one can consider (measurable) spherically symmetric sets --- whose indicators are spherically symmetric; obviously, any such set is the union of some family of spheres $S_r$.  
Other examples of Schur${}^2$-convex/concave functions/sets are given in Section~\ref{means}; see Propositions~\ref{prop:means}, \ref{prop:pq-means}, \ref{prop:hat,check B}, Corollary~\ref{cor:means1}, and  Remark~\ref{rem:rearr}. 
Further examples can be easily obtained by observing that the classes of all Schur${}^2$-convex/concave functions are each invariant with respect to taking linear combinations of such functions with nonnegative coefficients, taking (say, finite) pointwise suprema/infima, taking limits, as well as applying increasing transformations to the ranges of such functions. 

Similarly, one can define Schur-convex sets as the ones whose indicator is  Schur-concave. 
Neither the class of all Schur${}^2$-convex sets nor the class of all Schur-convex sets is contained in the other. Indeed, the complement to $\R^k$ of the unit Euclidean ball in $\R^k$ centered at the origin is Schur${}^2$-convex but not Schur-convex; on the other hand, the mentioned cross polytope (that is, the unit ball in $\R^k$ in the $\ell^1$-norm) is Schur-convex but not Schur${}^2$-convex. This remark also implies the mutual lack of containment between the class of all Schur${}^2$-convex (respectively, concave) functions and, on the other hand, the class of all Schur-convex (respectively, concave) functions.  

Let $\ZZ:=(Z_1,\dots,Z_k)$ stand for a standard normal random vector in $\R^k$, with zero mean and identity covariance matrix. 

As usual, let us say that two (Lebesgue) measurable functions on $\R^k$ are equivalent (to each other) if they are equal almost everywhere (a.e.), that is, if they 
differ only on a set of zero Lebesgue measure. Let us say that two measurable subsets of $\R^k$ are equivalent if their indicators are so. The basic result in this paper is 

\begin{theorem}\label{th:schur}
Suppose that a function $f\colon\R^k\to[0,\infty]$ is Schur${}^2$-concave. Take any $\si\in(0,\infty)$. 
\begin{enumerate}[(I)]
	\item  Then the function 
\begin{equation*}
\R^k\ni\xx\mapsto f^{(\si)}(\xx):=\E f(\si\ZZ+\xx) 	
\end{equation*}
is Schur${}^2$-concave as well. 
\item
Moreover, $f^{(\si)}$ is \emph{strictly} Schur${}^2$-concave unless $f$ 
is equivalent to a spherically symmetric function (in which latter case $f^{(\si)}$ is spherically symmetric, too). 
\end{enumerate}
\end{theorem}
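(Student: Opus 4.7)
The plan is to reduce to $k=2$ and then exploit the rotational invariance of the Gaussian via a polar decomposition, eventually collapsing the problem to an elementary monotonicity of $\sinh(cx)/x$.  Given $\yy^2\preceq\xx^2$ in $\R^k$, standard majorization theory connects $\xx^2$ to $\yy^2$ through a finite chain of T-transforms acting on pairs of squared coordinates.  Lifting such a step to $\R^k$ changes only two coordinates of $\xx$, and conditioning $f^{(\si)}$ on the remaining $k-2$ components of $\si\ZZ$ produces a two-variable function that inherits Schur${}^2$-concavity from $f$ (the class of Schur${}^2$-concave functions is closed under taking expectations).  Combined with the automatic $G_k$-invariance of $f^{(\si)}$ (immediate from the $G_k$-invariance of $f$ and of the distribution of $\ZZ$), this reduces the whole theorem to the case $k=2$.

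For $k=2$, by $G_2$-symmetry I may write $\xx=R(\cos\alpha,\sin\alpha)$ with $\alpha\in[0,\pi/4]$ and show that $\alpha\mapsto f^{(\si)}(\xx)$ is nondecreasing on $[0,\pi/4]$.  Writing the integration variable as $\uu=\rho(\cos\phi,\sin\phi)$, the Gaussian density of $\si\ZZ+\xx$ at $\uu$ factors as a radial piece (independent of $\alpha$) times $e^{c\cos(\phi-\alpha)}$ with $c:=\rho R/\si^2$.  After pulling out the $\rho$-integral, it suffices to show, for each $\rho>0$, that
\[
A(\alpha;\rho):=\int_0^{2\pi}f(\rho\cos\phi,\rho\sin\phi)\,e^{c\cos(\phi-\alpha)}\,d\phi
\]
is nondecreasing in $\alpha\in[0,\pi/4]$.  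A layer-cake decomposition reduces this further to the case where $f$ is the indicator of a Schur${}^2$-convex set, whose intersection with the circle of radius $\rho$ is a union of four equal-length arcs of common half-width $\delta\in[0,\pi/4]$ centered at the diagonal directions $\pi/4+k\pi/2$ for $k=0,1,2,3$.

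Folding the four arcs onto one period, the task becomes showing that
\[
H(\gamma):=\int_{\gamma-\delta}^{\gamma+\delta}\tilde K(\psi)\,d\psi,\qquad
\tilde K(\tau):=\sum_{k=0}^{3}e^{c\cos(\tau+k\pi/2)}=2\cosh(c\cos\tau)+2\cosh(c\sin\tau),
\]
is nonincreasing in $\gamma:=\pi/4-\alpha$ on $[0,\pi/4]$.  The crux of the proof, and the step I expect to need the most care, is the monotonicity
\[
\tilde K'(\tau)=2c\bigl[\cos\tau\,\sinh(c\sin\tau)-\sin\tau\,\sinh(c\cos\tau)\bigr]\le 0\qquad(\tau\in[0,\pi/4]),
\]
which, after dividing by $\sin\tau\cos\tau>0$, is precisely the elementary fact that $x\mapsto\sinh(cx)/x$ is strictly increasing on $(0,\infty)$.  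Given this, together with the evenness and $\pi/2$-periodicity of $\tilde K$ (hence its symmetry about $\pi/4$ on $[0,\pi/2]$), a short case split on whether $\gamma+\delta\le\pi/4$ or $\gamma+\delta>\pi/4$ yields $\tilde K(\gamma+\delta)\le\tilde K(\gamma-\delta)$ and hence $H'(\gamma)\le 0$.

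For the strictness in part~(II), each of the inequalities above is strict whenever $\delta\in(0,\pi/4)$; thus failure of strict Schur${}^2$-concavity of $f^{(\si)}$ propagates back to force the superlevel sets of $f$ to intersect almost every circle in either a null set or the whole circle.  This says $f$ is essentially constant on each sphere, i.e., equivalent to a spherically symmetric function, in which case $f^{(\si)}$ is itself spherically symmetric.
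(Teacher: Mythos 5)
Your part~(I) is correct and follows essentially the same route as the paper: reduction to $k=2$ via a majorization chain acting on two coordinates at a time, Fubini, and the $G_k$-invariance; then polar coordinates and folding the Gaussian kernel over the orbit of $G_2$, which produces exactly the paper's kernel $w_\rho(\tau)=2\cosh(c\cos\tau)+2\cosh(c\sin\tau)$ (your $\tilde K$); a layer-cake reduction to four arcs of half-width $\delta$ about the diagonal directions; and finally the inequality $\tilde K(\gamma+\delta)\le\tilde K(\gamma-\delta)$, which is the paper's $w_\rho(\eta-t)\ge w_\rho(\eta+t)$ in disguise. The only (harmless) differences are that you prove the monotonicity of $\tilde K$ on $[0,\pi/4]$ by differentiating and invoking the increase of $x\mapsto\sinh(cx)/x$, whereas the paper expands $w_\rho(\eta-t)-w_\rho(\eta+t)$ in a power series in the quantities $s_j(\al)=\cos^{2j}\al+\sin^{2j}\al$; and your layer cake runs over the values of $f$ rather than the paper's Stieltjes integral $\int J_{\rho,\eta}(t)\,\dd f_\rho(\eta)$ in the angular variable. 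These are equivalent.

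The genuine gap is in part~(II), for $k\ge3$. What your argument yields when strictness fails is that, for the relevant pair of coordinates and for \emph{almost every} value of the remaining $k-2$ coordinates, $f$ is a.e.\ constant on a.e.\ \emph{circle} of the corresponding two-dimensional section; you then assert this ``says $f$ is essentially constant on each sphere.'' That jump is exactly where the work lies. First, a.e.\ statements obtained separately in different two-dimensional sections do not automatically assemble into a single null set in $\R^k$ off which $f$ is radial. Second, even granting everywhere-constancy on all such circles, invariance of $h(v_1,\dots,v_k):=f(\sqrt{v_1},\dots,\sqrt{v_k})$ under transfers within one \emph{pair} of squared coordinates must still be promoted to invariance under all transfers preserving $v_1+\dots+v_k$, i.e.\ to genuine spherical symmetry. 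The paper resolves both issues by mollifying: it shows $f^{(\vp)}$ is finite and continuous for $0<\vp<\si$ (using $f^{(\si)}(\aa)<\infty$, which the strictness definition guarantees and which also gives local integrability of $f$), transfers the failure of strictness to $f^{(\vp)}$ through the semigroup identity $f^{(\si)}=\big(f^{(\vp)}\big)^{(\sqrt{\si^2-\vp^2})}$, upgrades ``a.e.\ constant'' to ``everywhere constant'' by continuity, runs the induction $\A_2\Rightarrow\cdots\Rightarrow\A_k$ to reach full spherical symmetry of $f^{(\vp)}$, and only then recovers $f$ up to equivalence via the Lebesgue-point convergence $f^{(\vp)}\to f$ a.e. None of this machinery appears in your sketch, and without it the conclusion of part~(II) does not follow for $k\ge3$; for $k=2$ your outline is essentially adequate modulo the same a.e.-on-a.e.-circle bookkeeping.
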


This immediately yields

\begin{corollary}\label{cor:schur}
Suppose that a set $A\subseteq\R^k$ is Schur${}^2$-convex.
\begin{enumerate}[(I)]
	\item  Then the function 
\begin{equation*}
\R^k\ni\xx\mapsto P(\xx):=\PP(\ZZ\in A+\xx) 	
\end{equation*}
is Schur${}^2$-concave. 
\item
Moreover, the function $P$ is \emph{strictly} Schur${}^2$-concave unless $A$ 
is equivalent to a spherically symmetric set. 
\end{enumerate}
\end{corollary}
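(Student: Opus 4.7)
The plan is to deduce Corollary~\ref{cor:schur} directly from Theorem~\ref{th:schur} applied to the indicator function $f:=\mathbf{1}_A$ with $\si=1$. By Definition~\ref{def:2conc}, the Schur${}^2$-convexity of $A$ is by fiat the Schur${}^2$-concavity of its indicator $f$, so the hypothesis of Theorem~\ref{th:schur} is met, and Theorem~\ref{th:schur}(I) at once produces a Schur${}^2$-concave function $f^{(1)}(\xx)=\E\mathbf{1}_A(\ZZ+\xx)=\PP(\ZZ\in A-\xx)$.

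The only piece of bookkeeping is to identify $f^{(1)}(\xx)$ with $P(\xx)=\PP(\ZZ\in A+\xx)$. I would bridge the sign via two observations: first, by the symmetry $\ZZ\stackrel{d}{=}-\ZZ$ of the standard Gaussian, $\PP(\ZZ\in A-\xx)=\PP(-\ZZ\in A-\xx)=\PP(\ZZ\in -A+\xx)$; second, by Remark~\ref{rem:symm}, any Schur${}^2$-convex set $A$ is invariant under the reflection group $G_k$, and in particular under the composition of all $k$ coordinate reflections $\xx\mapsto-\xx$, so $-A=A$. (Equivalently, Definition~\ref{def:2conc} forces $\mathbf{1}_A(\xx)=g(\xx^2)=\mathbf{1}_A(-\xx)$.) Combining, $P(\xx)=f^{(1)}(\xx)$, so part~(I) is proved.

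Part~(II) follows from Theorem~\ref{th:schur}(II) under the same identification: the exceptional case there is that $f$ be equivalent to a spherically symmetric function, which for $f=\mathbf{1}_A$ amounts precisely to $A$ being equivalent (up to a set of Lebesgue measure zero) to a spherically symmetric set. There is no substantive obstacle to any of this; the entire content resides in Theorem~\ref{th:schur}, which is why the excerpt introduces the corollary with "This immediately yields." If anything deserves a moment of care, it is only the verification that Schur${}^2$-convex sets are invariant under $\xx\mapsto-\xx$, which is what lets $f^{(1)}$ and $P$ be matched rather than merely related by a sign flip.
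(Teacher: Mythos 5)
Your proof is correct and is exactly the argument the paper intends: the paper gives no separate proof, introducing the corollary with ``This immediately yields'' after Theorem~\ref{th:schur}, and the intended derivation is precisely your application of the theorem to $f=\mathbf{1}_A$. The sign bookkeeping you flag is handled correctly (one could equally note that $P(\xx)=f^{(1)}(-\xx)$ and that any Schur${}^2$-concave function is even, so the identification is automatic).
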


On the other hand, one has

\begin{proposition}\label{prop:counterex}
For each integer $k\ge2$ there exist a random vector $\XX$ in $\R^k$ uniformly distributed in a Euclidean ball in $\R^k$ centered at the origin 
and a Schur${}^2$-convex set $A\subseteq\R^k$ such that the function 
$
\R^k\ni\xx\mapsto\PP(\XX\in A+\xx) 	
$
is \emph{not} Schur${}^2$-concave.
\end{proposition}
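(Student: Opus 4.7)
My plan is to exhibit an explicit counterexample using the complement of a suitably scaled $\ell_1$-ball. Fix any $c>0$ and set $A := \{\xx \in \R^k : \|\xx\|_1 \geq c\}$. To verify that $A$ is Schur${}^2$-convex, note that $\|\xx\|_1 = \sum_{i=1}^k \sqrt{x_i^2} = g(\xx^2)$, where $g(\aa) := \sum_i \sqrt{a_i}$ is Schur-concave on $[0,\infty)^k$ by the standard criterion that $\sum_i \phi(a_i)$ is Schur-concave whenever $\phi$ is concave. Thus $\|\cdot\|_1$ is Schur${}^2$-concave and its superlevel set $A$ is Schur${}^2$-convex.

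Let $\XX$ be uniform on the Euclidean ball $B_R$ centered at $\0$, with $R$ to be chosen below. Writing $D_c(\xx) := \xx + \{\zz : \|\zz\|_1 \leq c\}$ for the cross-polytope centered at $\xx$ of $\ell_1$-radius $c$, I have
\begin{equation*}
P(\xx) := \PP(\XX \in A+\xx) = 1 - \frac{\mathrm{vol}\bigl(B_R \cap D_c(\xx)\bigr)}{\mathrm{vol}(B_R)}.
\end{equation*}
I will then test Schur${}^2$-concavity of $P$ on the pair $\xx := \ee_1$ and $\yy := \one/\sqrt{k}$, both of Euclidean norm~$1$, for which $\xx^2 = \ee_1$ strictly majorizes $\yy^2 = (1/k)\one$.

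The heart of the proof is a comparison of the vertex sets of the two translated cross-polytopes. The vertices of $D_c(\xx)$ are the $2k$ points $\xx \pm c\ee_i$ for $i = 1,\dots,k$; the one farthest from $\0$ is $(1+c)\ee_1$, at distance $1+c$. The vertices of $D_c(\yy)$ are likewise $\yy \pm c\ee_i$, and a direct expansion gives their squared Euclidean norms as $1 \pm 2c/\sqrt{k} + c^2$, so the farthest of them lies at distance $\sqrt{1 + 2c/\sqrt{k} + c^2}$. Since
\begin{equation*}
(1+c)^2 - \bigl(1 + 2c/\sqrt{k} + c^2\bigr) = 2c\bigl(1 - 1/\sqrt{k}\bigr) > 0 \qquad (k \geq 2),
\end{equation*}
I can choose $R$ strictly between $\sqrt{1 + 2c/\sqrt{k} + c^2}$ and $1+c$. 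With this choice, every vertex of $D_c(\yy)$ lies in the open ball $B_R$, so $D_c(\yy) \subseteq B_R$ by convexity of $B_R$; meanwhile, the vertex $(1+c)\ee_1$ of $D_c(\xx)$ lies outside $\overline{B_R}$.

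It then follows that $\mathrm{vol}(B_R \cap D_c(\xx)) < \mathrm{vol}(D_c(\xx)) = \mathrm{vol}(D_c(\yy)) = \mathrm{vol}(B_R \cap D_c(\yy))$, whence $P(\xx) > P(\yy)$ even though $\xx^2 \succ \yy^2$, refuting Schur${}^2$-concavity of $P$. The only point needing some care is the strict inequality $\mathrm{vol}(B_R \cap D_c(\xx)) < \mathrm{vol}(D_c(\xx))$: this holds because $(1+c)\ee_1$, being a vertex of the $k$-dimensional polytope $D_c(\xx)$, has a polytopal cone of full dimension around it inside $D_c(\xx)$, a sufficiently small neighborhood of which remains outside $B_R$ by continuity and hence has positive Lebesgue measure.
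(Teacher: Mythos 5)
Your proof is correct, and it is a genuine (if closely related) alternative to the paper's. The paper takes $A$ to be the cube $B_\infty(1)=\{\xx\in\R^k\colon\max_j|x_j|\le1\}$, which is Schur${}^2$-convex by Proposition~\ref{prop:means}, and picks the ball radius so that the cube shifted in a coordinate direction is wholly contained in the ball while the congruent cube shifted in the diagonal direction of the same Euclidean length pushes a vertex (and hence a full-dimensional corner of positive measure) outside. You work with the dual polytope: $A$ is the complement of a scaled cross-polytope, whose Schur${}^2$-convexity you verify directly and correctly via the Schur-concavity of $\aa\mapsto\sum_i\sqrt{a_i}$ (consistent with Corollary~\ref{cor:means1} for $p=1$); for the cross-polytope the roles of the two shift directions are reversed --- the coordinate shift is the one whose farthest vertex, at distance $1+c$, escapes the ball, versus $\sqrt{1+2c/\sqrt k+c^2}$ for the diagonal shift --- and the complementation flips the resulting volume inequality back into exactly the one that contradicts Schur${}^2$-concavity. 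The underlying mechanism (wedge $R$ strictly between the two farthest-vertex distances, then note that a full-dimensional cone at the escaping vertex carries positive Lebesgue measure) is the same in both arguments, and your handling of the strict volume inequality is adequate. What your version buys is that it works verbatim for every $c>0$ with unit shift vectors and is self-contained in its Schur${}^2$-convexity check; what the paper's buys is a bounded counterexample set $A$.
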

\noindent
A similar counterexample was given in \cite{HKP} for $k=2$. 
Note that the density of the distribution of the random vector $\XX$ in Proposition~\ref{prop:counterex} is spherically symmetric, Schur-concave, Schur${}^2$-concave, and hence unimodal -- in the sense that it is a nonincreasing function of the distance from the point to the origin.

Thus, the condition in Theorem~\ref{th:schur} and Corollary~\ref{cor:schur}
that the random vector $\ZZ$ be normally distributed is essential.   
However, it is clear that Theorem~\ref{th:schur} and Corollary~\ref{cor:schur} will hold for any 
random vector $\XX$ whose distribution is any mixture of spherically symmetric normal distributions in $\R^k$; cf.\ \cite[Theorem~3.1]{HKP}; such an $\XX$ equals in distribution the random vector of the form $\Lambda\ZZ$, where $\Lambda$ is a positive random variable independent of $\ZZ$; at that, if $1/\Lambda^2$ has a $\chi^2$ distribution, then $\XX$ has a  multivariate $t$ distribution; cf.\ Marshall and Olkin \cite[\S4]{marsh-olkin74}; see also Strawderman \cite{straw} for a discussion of mixtures of spherically symmetric normal distributions. 

\subsection{Application to $p$-mean tests for multivariate means}\label{means}
For any $p\in(-\infty,\infty)\setminus\{0\}$, define the $p$-mean of any vector $\xx=(x_1,\dots,x_k)\in\R^k$ by the formula 
\begin{equation}\label{eq:_p}
	\no\xx_p
	:=\Big(\frac1k\,\sum\limits_{j=1}^k|x_j|^p\Big)^{1/p}; 
\end{equation} 
at that, if $p\in(-\infty,0)$, use the continuity conventions $0^p:=\infty$ and $\infty^{1/p}:=0$, so that $\no\xx_p=0$ if $p\in(-\infty,0)$ and at least one of the $x_j$'s is $0$. 
As usual, extend definition \eqref{eq:_p} of the $p$-mean by continuity to all $p\in[-\infty,\infty]$, so that 
\begin{equation*}
	\no\xx_{-\infty}=\min_{j=1}^k|x_j|,\quad 
	\no\xx_0=\prod_{j=1}^k|x_j|^{1/k},\quad 
	\no\xx_\infty=\max_{j=1}^k|x_j|.  
\end{equation*}
For $p\in[1,\infty]$, the function $\no\cdot_p$ is a norm, which differs from the more usual $p$-norm by the factor $\frac1{k^{1/p}}$; this factor is needed in order that $\no\xx_p\to\no\xx_0$ as $p\to0$. 
Note that 
\begin{equation}\label{eq:one<u<e}
	\one=\one^2\preceq\uu^2\preceq(\sqrt k\,\ee_1)^2=k\,\ee_1, 
\end{equation}
for any $\no\cdot_2$-unit vector $\uu\in\R^k$, where 
\begin{equation*}
	\one:=(1,\dots,1)\in\R^k\quad\text{and}\quad\ee_1:=(1,0,\dots,0)\in\R^k. 
\end{equation*}
Let 
\begin{equation*}
	B_p(\vp):=\{\xx\in\R^k\colon\no\xx_p\le\vp\}, 
\end{equation*}
the ``ball'' of ``radius'' $\vp$ with respect to the ``norm'' $\no\cdot_p$. 

\begin{proposition}\label{prop:means}
The $p$-mean $\no\cdot_p$ is Schur${}^2$-concave for $p\in[-\infty,2]$ and Schur${}^2$-convex for $p\in[2,\infty]$. 
\end{proposition}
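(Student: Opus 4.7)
The plan is to reduce the claim to the classical Hardy--Littlewood--P\'olya criterion by the simple device of squaring coordinates. Setting $y_j:=x_j^2\ge0$ and, for $p\neq 0$, writing
\[
\no\xx_p \;=\; F_p\!\Big(\sum_{j=1}^k \psi_p(y_j)\Big),\qquad \psi_p(y):=y^{p/2},\qquad F_p(t):=(t/k)^{1/p},
\]
I observe that, by Definition~\ref{def:2conc}, Schur${}^2$-(con/vex)ity of $\no\cdot_p$ in $\xx$ is exactly ordinary Schur-(con/vex)ity of the right-hand side in $(y_1,\dots,y_k)\in[0,\infty)^k$.

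The two tools I would invoke are (i) the classical criterion that, for any convex (respectively, concave) $\psi\colon[0,\infty)\to[0,\infty]$, the separable symmetric function $(y_1,\dots,y_k)\mapsto\sum_{j=1}^k\psi(y_j)$ is Schur-convex (respectively, Schur-concave) on $[0,\infty)^k$, see e.g.~\cite{marsh-ol}; and (ii) the elementary fact that composing a Schur-concave function with an increasing real map preserves Schur-concavity, while composing a Schur-convex function with a decreasing real map turns it into a Schur-concave one (this is immediate from the definitions).

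With these in hand, the three nonboundary ranges of $p$ are handled uniformly. For $p\in(0,2]$ the exponent $p/2\in(0,1]$ makes $\psi_p$ concave on $[0,\infty)$ and $F_p$ is increasing, so $\no\cdot_p$ is Schur${}^2$-concave. For $p\in[2,\infty)$, $\psi_p$ is convex and $F_p$ is still increasing, giving Schur${}^2$-convexity. For $p\in(-\infty,0)$, $\psi_p$ is convex on $(0,\infty)$ (extended by $\psi_p(0):=\infty$, in accord with the conventions already adopted in the paper), so the inner sum is Schur-convex; but now $F_p$ is \emph{decreasing}, which reverses the direction and yields Schur${}^2$-concavity.

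The three boundary cases I would settle by direct inspection. For $p=0$, the identity $2k\log\no\xx_0=\sum_j\log y_j$, together with concavity of $\log$ on $[0,\infty)$ (with $\log 0:=-\infty$) and monotonicity of $\exp$, gives Schur${}^2$-concavity. For $p=+\infty$, $\no\xx_\infty=\sqrt{\max_j y_j}$ and the coordinate-wise maximum is manifestly Schur-convex on $[0,\infty)^k$. For $p=-\infty$, $\no\xx_{-\infty}=\sqrt{\min_j y_j}$ and the minimum is Schur-concave. The main ``obstacle'' here is essentially nil; the only thing to get right is the sign of the exponent $1/p$ appearing in $F_p$ and the bookkeeping around the conventions $0^p=\infty$ and $\infty^{1/p}=0$ when $p<0$, both of which are already built into the paper's definitions and cause no trouble because extended-real Schur-(con/vex)ity is preserved under the relevant monotone compositions.
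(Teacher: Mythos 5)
Your proof is correct. It is, however, a genuinely different route from the paper's. The paper does not prove Proposition~\ref{prop:means} directly: it obtains it as the special case $q=0$ (using $\no\cdot_{p,q}=\no\cdot_{q,p}$ when $p<0$) of Proposition~\ref{prop:pq-means}, whose proof computes $\frac{\partial}{\partial u_i}\no{(\sqrt{u_1},\dots,\sqrt{u_k})}_{p,q}$ on the open orthant and invokes the Schur--Ostrowski criterion \cite[Theorem A.4 of Chapter 3]{marsh-ol}. You instead write $\no\xx_p=F_p\big(\sum_j\psi_p(x_j^2)\big)$ and combine the separable-sum criterion (a convex/concave $\psi$ yields a Schur-convex/concave $\sum_j\psi(y_j)$) with monotone outer composition, the sign of $1/p$ dictating whether the direction is preserved or reversed. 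Both arguments pass through the substitution $y_j=x_j^2$ and both rest on standard Marshall--Olkin facts, but the key lemmas differ. Your approach buys cleaner, differentiation-free handling of the degenerate cases ($p=0,\pm\infty$, and zero coordinates when $p<0$, where the conventions $0^p=\infty$, $\infty^{1/p}=0$ enter), which the paper's derivative computation on $(0,\infty)^k$ treats only implicitly; the paper's approach buys the strictly more general $(p,q)$-mean result in one stroke, which your decomposition cannot reach, since for $q\ne 0$ the quantity $\no\cdot_{p,q}$ is a ratio of two separable sums rather than a monotone function of a single one. The one point worth making explicit in your write-up is that the separable-sum criterion and the monotone-composition step are being applied to extended-real-valued functions (e.g.\ $\psi_p(0)=\infty$ for $p<0$, and the intermediate $\sum_j\log y_j\in[-\infty,\infty)$ for $p=0$); both facts do extend verbatim to that setting, so this is a matter of bookkeeping rather than a gap.
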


This proposition, which is a special case of Proposition~\ref{prop:pq-means} below, immediately implies

\begin{corollary}\label{cor:means1}
The ``ball'' $B_p(\vp)$ is Schur${}^2$-concave for $p\in[-\infty,2]$ and Schur${}^2$-convex for $p\in[2,\infty]$. 
\end{corollary}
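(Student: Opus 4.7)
The plan is to deduce this corollary directly from Proposition~\ref{prop:means} by unwinding the definitions; there is no real obstacle, just a careful check of the direction of the majorization inequality.

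First I would note that, by Definition~\ref{def:2conc} (extended in the obvious way from ``Schur${}^2$-concave'' to ``Schur${}^2$-convex''), Proposition~\ref{prop:means} asserts the existence, for each $p\in[-\infty,\infty]$, of a function $g_p\colon[0,\infty)^k\to[0,\infty]$ with $\no\xx_p=g_p(\xx^2)$ for all $\xx\in\R^k$, such that $g_p$ is Schur-convex for $p\in[2,\infty]$ and Schur-concave for $p\in[-\infty,2]$. In either case the sublevel set $\{\no\xx_p\le\vp\}=B_p(\vp)$ can be written as $\{\xx\in\R^k\colon g_p(\xx^2)\le\vp\}$.

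Next I would treat the two cases separately. For $p\in[2,\infty]$, take any $\xx\in B_p(\vp)$ and any $\yy\in\R^k$ with $\yy^2\preceq\xx^2$. By Schur-convexity of $g_p$, one has $\no\yy_p=g_p(\yy^2)\le g_p(\xx^2)=\no\xx_p\le\vp$, so $\yy\in B_p(\vp)$; this is exactly the condition in Definition~\ref{def:2conc} for $B_p(\vp)$ to be Schur${}^2$-convex. For $p\in[-\infty,2]$, take any $\xx\in B_p(\vp)$ and any $\yy\in\R^k$ with $\yy^2\succeq\xx^2$. By Schur-concavity of $g_p$ (which reverses majorization), one has $\no\yy_p=g_p(\yy^2)\le g_p(\xx^2)=\no\xx_p\le\vp$, so again $\yy\in B_p(\vp)$; this is the condition for $B_p(\vp)$ to be Schur${}^2$-concave.

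The only potentially delicate point is keeping straight that ``Schur-concave'' means the function reverses $\succeq$, so that the two cases $p\le2$ and $p\ge2$ end up controlling $B_p(\vp)$ under opposite majorization directions; other than this bookkeeping, the proof is immediate from Proposition~\ref{prop:means}.
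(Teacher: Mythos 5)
Your proof is correct and matches the paper, which gives no separate argument but simply states that Proposition~\ref{prop:means} ``immediately implies'' the corollary; your careful unwinding of Definition~\ref{def:2conc} for sublevel sets is exactly the intended (and only needed) step, with the majorization directions handled correctly in both cases.
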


So, by Corollary~\ref{cor:schur}, one has 

\begin{corollary}\label{cor:means2}
The function 
\begin{equation*}
\R^k\ni\xx\mapsto \PP(\no{\ZZ+\xx}_p\le\vp) 	
\end{equation*}
is Schur${}^2$-convex for $p\in[-\infty,2]$ and Schur${}^2$-concave for $p\in[2,\infty]$. 
\end{corollary}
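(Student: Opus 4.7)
The plan is to deduce this corollary directly from Corollary~\ref{cor:schur} combined with Corollary~\ref{cor:means1}, handling the sign swap between Schur${}^2$-concavity and Schur${}^2$-convexity by passing to complements when needed.

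First I would make the straightforward translation from the event $\{\no{\ZZ+\xx}_p\le\vp\}$ to a statement of the form $\{\ZZ\in A+\xx\}$ so that Corollary~\ref{cor:schur} applies verbatim. Since $\no{\cdot}_p$ depends only on the absolute values of the coordinates, the ball $B_p(\vp)$ is symmetric about the origin, and the standard normal vector $\ZZ$ is symmetric in distribution, so
\begin{equation*}
\PP(\no{\ZZ+\xx}_p\le\vp)=\PP(\ZZ+\xx\in B_p(\vp))=\PP(\ZZ\in B_p(\vp)+\xx),
\end{equation*}
using $\PP(\ZZ-\xx\in B_p(\vp))=\PP(-\ZZ-\xx\in -B_p(\vp))=\PP(\ZZ\in B_p(\vp)+\xx)$ at the last step.

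For $p\in[2,\infty]$, Corollary~\ref{cor:means1} tells us $B_p(\vp)$ is Schur${}^2$-convex, so Corollary~\ref{cor:schur} applied with $A:=B_p(\vp)$ immediately yields that $\xx\mapsto\PP(\ZZ\in B_p(\vp)+\xx)$ is Schur${}^2$-concave, which is the claim. For $p\in[-\infty,2]$, Corollary~\ref{cor:means1} gives that $B_p(\vp)$ is Schur${}^2$-concave, which by Definition~\ref{def:2conc} means that its complement $A:=\R^k\setminus B_p(\vp)$ is Schur${}^2$-convex. Applying Corollary~\ref{cor:schur} to this $A$ shows that
\begin{equation*}
\xx\mapsto\PP(\ZZ\in A+\xx)=1-\PP(\ZZ\in B_p(\vp)+\xx)=1-\PP(\no{\ZZ+\xx}_p\le\vp)
\end{equation*}
is Schur${}^2$-concave, which is equivalent to $\xx\mapsto\PP(\no{\ZZ+\xx}_p\le\vp)$ being Schur${}^2$-convex, as desired.

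There is essentially no obstacle to overcome here: all the substance has been packaged into Corollary~\ref{cor:schur} and Corollary~\ref{cor:means1}. The only matters that require a moment's care are the trivial symmetrization identity identifying $\PP(\no{\ZZ+\xx}_p\le\vp)$ with $\PP(\ZZ\in B_p(\vp)+\xx)$, and the bookkeeping fact that complementation converts Schur${}^2$-concavity of a set into Schur${}^2$-convexity, and correspondingly flips Schur${}^2$-concavity/convexity for the resulting probability through the identity $1-P(\xx)$.
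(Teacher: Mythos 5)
Your proposal is correct and follows exactly the route the paper intends: the paper derives Corollary~\ref{cor:means2} directly from Corollaries~\ref{cor:schur} and~\ref{cor:means1}, leaving implicit the same two bookkeeping steps (the symmetry identity $\PP(\no{\ZZ+\xx}_p\le\vp)=\PP(\ZZ\in B_p(\vp)+\xx)$ and the passage to the complement $\R^k\setminus B_p(\vp)$ for $p\in[-\infty,2]$) that you spell out. No differences of substance.
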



\begin{remark}\label{rem:rearr}
More generally, Proposition~\ref{prop:means} and Corollaries~\ref{cor:means1} and \ref{cor:means2} hold if the $p$-mean $\no\cdot_p$ is replaced by $\no\cdot_{\ell,\uparrow;p}$ if $p\in[-\infty,2]$ and by $\no\cdot_{\ell,\downarrow;p}$ if $p\in[2,\infty]$, where $\ell$ is any integer in the set $\{1,\dots,k\}$, $\no\uu_{\ell,\uparrow;p}$ is the $p$-mean of the vector in $\R^\ell$ whose coordinates are the $\ell$ smallest absolute values of the coordinates of a vector $\uu\in\R^k$, and $\no\uu_{\ell,\downarrow;p}$ is defined similarly by taking the $\ell$ largest absolute values.  
In particular, $\no\cdot_{k,\uparrow;p}=\no\cdot_{k,\downarrow;p}=\no\cdot_p$, $\no\cdot_{1,\uparrow;p}=\no\cdot_{-\infty}$, and $\no\cdot_{1,\downarrow;p}=\no\cdot_\infty$ for all $p\in[-\infty,\infty]$.  
For $p\ge1$, the means $\no\cdot_{\ell,\downarrow;p}$ are used in geometric theory of Banach spaces; see e.g.\ \cite{gordon02}.    
\end{remark}

Another generalization of the $p$-mean is what we shall refer to as the $(p,q)$-mean, defined as follows: for any $\xx\in\R^k$ with all coordinates $x_j$ nonzero,  
\begin{equation*}
	\no\xx_{p,q}
	:=\Big(\sum\limits_{j=1}^k|x_j|^p\Big/\sum\limits_{j=1}^k|x_j|^q\Big)^{\frac1{p-q}} 
\end{equation*}  
if $p$ and $q$ are any real numbers such that $p\ne q$. This definition can be extended by continuity to the case when some of the $x_j$'s are zero. Moreover, also by continuity, one has 
\begin{equation*}
	\no\xx_{p,p}
	:=\prod\limits_{j=1}^k|x_j|^{|x_j|^p\big/\sum_{i=1}^k|x_i|^p}, 
\end{equation*} 
with the conventions $0^0:=1$ and $\no\0_{p,p}:=0$. 
Also, one can extend the definition of $\no\cdot_{p,q}$ by continuity to include the cases when $p$ or $q$ equal $\infty$ or $-\infty$, so that for each $p\in\R$ the $(p,q)$-mean $\no\cdot_{p,q}$ is nondecreasing in $q\in[-\infty,\infty]$ from $\no\cdot_{-\infty}$ to $\no\cdot_\infty$. 
Moreover, $\no\cdot_{p,0}=\no\cdot_p$ and $\no\cdot_{p,q}=\no\cdot_{q,p}$, so that without loss of generality $p\ge q$. 

Consider the corresponding ``balls'' in $\R^k$: 
\begin{equation*}
	B_{p,q}(\vp):=\{\xx\in\R^k\colon\no{\xx}_{p,q}\le\vp\}, \quad\text{so that}\quad 
	B_{p,0}(\vp)=B_p(\vp).  
\end{equation*}
Also, for any $a\ge0$, let us introduce the sets 
\begin{equation*}
	\hat B_p(a,\vp):=\bigcup_{g\in G_k}\big(ag\one+B_p(\vp)\big)\quad\text{and}\quad 
	\check B_p(a,\vp):=\bigcup_{g\in G_k}\big(ag\ee_1+B_p(\vp)\big), 
\end{equation*}
where $G_k$ is the group of transformations defined in Remark~\ref{rem:symm}. 

A few of these sets are shown in Figure~\ref{fig1}, for dimension $k=2$. 
\begin{figure}[htbp]
	\centering
		\includegraphics[width=1.00\textwidth]{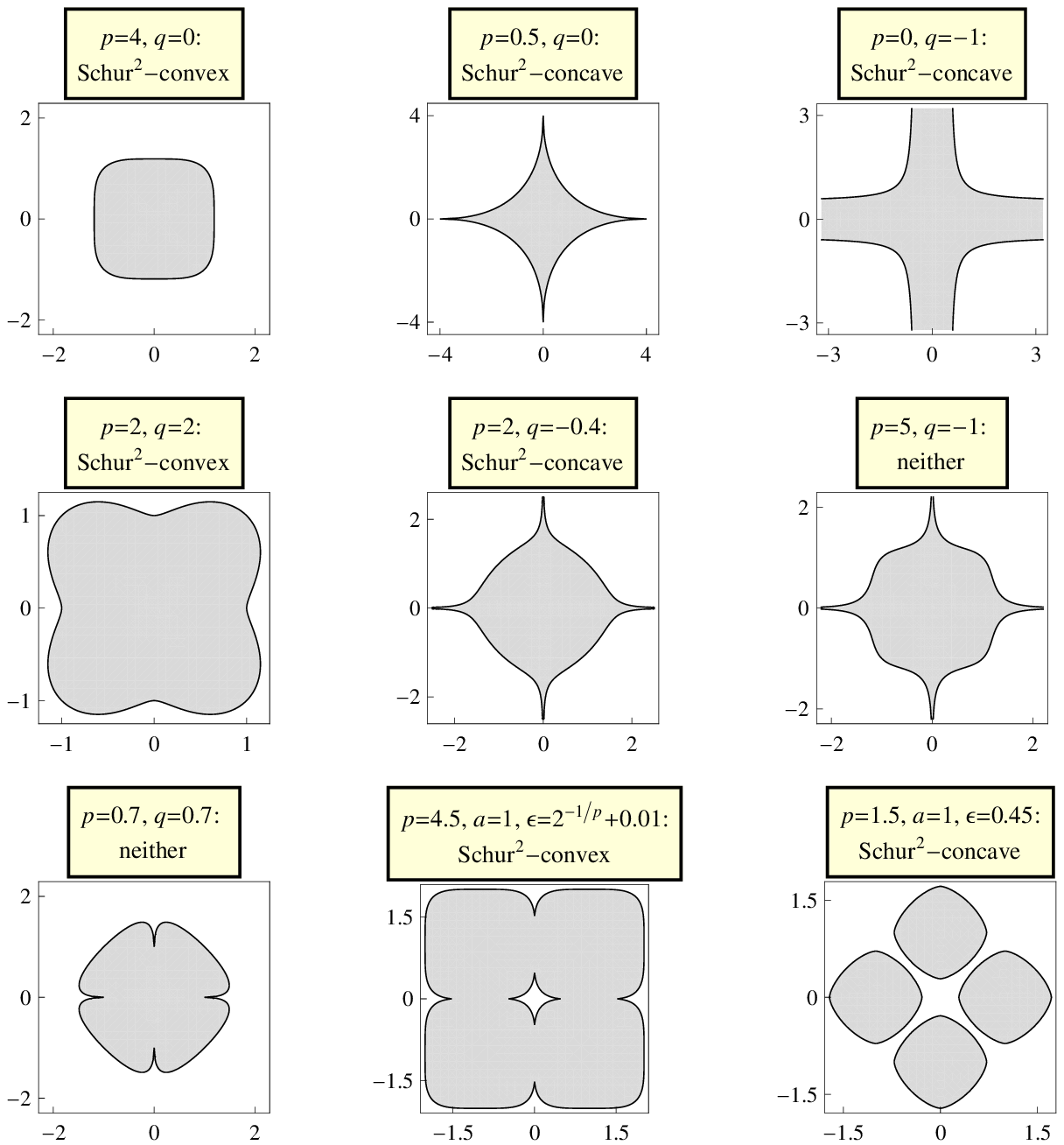}
	\caption{Sets $B_{p,q}(\vp)$, $\hat B_p(a,\vp)$, and $\check B_p(a,\vp)$.}
	\label{fig1}
\end{figure}
The first 7 pictures (the top two rows and the leftmost of the bottom row) show the unit ``balls'' $B_{p,q}(1)$ for 7 selected pairs $(p,q)$; the last two pictures show the sets $\hat B_{4.5}(1,2^{-1/4.5}+0.01)$ and $\check B_{1.5}(1,\frac12-0.05)$, respectively. By Propositions~\ref{prop:pq-means} and \ref{prop:hat,check B}, 7 
of these 9 sets are either Schur${}^2$-convex or Schur${}^2$-concave, while the other two -- $B_{5,-1}(1)$ and $B_{0.7,0.7}(1)$ -- are neither. 
If $p>q$ and $q<0$, the set $B_{p,q}(\vp)$ contains the entire coordinate axes and therefore is unbounded; so, Figure~\ref{fig1} shows only parts of the ``balls'' $B_{p,q}(1)$ with $(p,q)=(0,-1),\;(2,-0.4),\;(5,-1)$. 

\begin{proposition}\label{prop:pq-means}
The $(p,q)$-mean $\no\cdot_{p,q}$ is Schur${}^2$-concave if $q\le0\le p\le2$ and Schur${}^2$-convex if $0\le q\le2\le p$. In particular, $\no\cdot_{2,2}$ is Schur${}^2$-convex. 
Thus, Corollary~\ref{cor:schur} holds with $B_{p,q}(\vp)$ in place of $A$ if $0\le q\le2\le p$, and it holds with the complement of $B_{p,q}(\vp)$ (to $\R^k$) in place of $A$ if $q\le0\le p\le2$. 
\end{proposition}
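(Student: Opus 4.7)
The plan is to make the substitution $y_j:=x_j^2$ and apply the Schur--Ostrowski criterion on the open positive orthant. By the symmetry $\no\cdot_{p,q}=\no\cdot_{q,p}$ in $(p,q)$, we may assume $p>q$; then, with $S_\al(\yy):=\sum_{j=1}^k y_j^\al$ and $\Phi(\yy):=S_{p/2}(\yy)/S_{q/2}(\yy)$, the identity $|x_j|^r=y_j^{r/2}$ gives
\begin{equation*}
\no\xx_{p,q}=\Phi(\yy)^{1/(p-q)}.
\end{equation*}
Since $t\mapsto t^{1/(p-q)}$ is strictly increasing on $(0,\infty)$, it is enough to establish the corresponding Schur-convexity/concavity of $\Phi$ on $(0,\infty)^k$; the conclusion then extends to all of $\R^k$ by continuity of $\no\cdot_{p,q}$ and the fact that majorization is translation-invariant (so $\xx^2\succeq\yy^2$ is recovered as the limit of $(\xx^2+\frac1n\one)\succeq(\yy^2+\frac1n\one)$ with both sides in $(0,\infty)^k$).

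Both $\Phi$ and $\no\cdot_{p,q}$ are symmetric and $C^1$ on $(0,\infty)^k$, so by the Schur--Ostrowski criterion I only need the sign, for $y_1\ne y_2$, of
\begin{equation*}
D:=(y_1-y_2)\bigl(\partial_1\Phi-\partial_2\Phi\bigr)
=\frac{y_1-y_2}{S_{q/2}^2}\Bigl[\tfrac{p}{2}S_{q/2}\bigl(y_1^{p/2-1}-y_2^{p/2-1}\bigr)-\tfrac{q}{2}S_{p/2}\bigl(y_1^{q/2-1}-y_2^{q/2-1}\bigr)\Bigr].
\end{equation*}
When $0\le q\le2\le p$, we have $p/2-1\ge 0$ and $q/2-1\le 0$, so $y\mapsto y^{p/2-1}$ is non-decreasing and $y\mapsto y^{q/2-1}$ is non-increasing. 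Combined with $p/2\ge0$, $-q/2\le0$, and $S_{p/2},S_{q/2}>0$, each summand in the bracket has the same sign as $y_1-y_2$; hence $D\ge 0$ and $\Phi$ is Schur-convex, i.e., $\no\cdot_{p,q}$ is Schur${}^2$-convex. When $q\le 0\le p\le 2$, we have $p/2-1\le 0$ and $q/2-1\le 0$, so both $y\mapsto y^{p/2-1}$ and $y\mapsto y^{q/2-1}$ are non-increasing; with $p/2\ge0$ and $-q/2\ge0$, each summand now has sign opposite to $y_1-y_2$, so $D\le 0$ and $\no\cdot_{p,q}$ is Schur${}^2$-concave. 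The diagonal cases $p=q=2$ (respectively $p=q=0$) follow by passing to the pointwise limit $\no\cdot_{p+\vp,p-\vp}\to\no\cdot_{p,p}$ as $\vp\downarrow0$, since Schur-convexity and Schur-concavity are both preserved under pointwise limits.

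To obtain the final assertion of the proposition, note that Schur${}^2$-convexity of $\no\cdot_{p,q}$ is exactly the implication $\yy^2\preceq\xx^2\Rightarrow\no\yy_{p,q}\le\no\xx_{p,q}$, so the sublevel set $B_{p,q}(\vp)$ meets the closure condition of Definition~\ref{def:2conc} and is a Schur${}^2$-convex set; dually, Schur${}^2$-concavity of $\no\cdot_{p,q}$ makes $\R^k\setminus B_{p,q}(\vp)$ a Schur${}^2$-convex set. Applying Corollary~\ref{cor:schur} with $A=B_{p,q}(\vp)$ in the first case and $A=\R^k\setminus B_{p,q}(\vp)$ in the second case delivers the stated conclusions. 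The only nontrivial obstacle is the sign bookkeeping for $D$ in the two cases; everything else is routine continuity-based extension from $(0,\infty)^k$ to $[0,\infty)^k$ and from $p>q$ to $p=q$.
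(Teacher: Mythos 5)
Your proposal is correct and follows essentially the same route as the paper: substitute $u_j=x_j^2$, compute the difference of partial derivatives of the resulting symmetric function, and apply the Schur--Ostrowski criterion, with the same sign bookkeeping in the two parameter ranges. You simply spell out details the paper leaves implicit (the explicit sign analysis, the continuity extension to the boundary of the orthant, and the limiting argument for the diagonal case $p=q$), all of which are handled correctly.
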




\begin{proposition}\label{prop:hat,check B}
The sets $\hat B_p(a,\vp)$ are Schur${}^2$-convex for $p\ge2$, and the sets $\check B_p(a,\vp)$ are Schur${}^2$-concave for $p\in[1,2]$. 
Thus, Corollary~\ref{cor:schur} holds with $\hat B_p(a,\vp)$ in place of $A$ if $p\ge2$, and it holds with the complement of $\check B_p(a,\vp)$ in place of $A$ if $p\in[1,2]$.
\end{proposition}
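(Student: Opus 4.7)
My plan is to exploit the $G_k$-invariance of both $\hat B_p(a,\vp)$ and $\check B_p(a,\vp)$ to reduce membership to a sub-level condition on a symmetric function of $\xx^2$, and then verify Schur convexity or concavity of that function on $[0,\infty)^k$ by standard criteria. Without loss of generality $a\ge0$, since both sets depend on $a$ only through $|a|$.

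For $\hat B_p(a,\vp)$ I observe that $G_k\one=\{\pm 1\}^k$, so $\xx\in\hat B_p(a,\vp)$ iff $\min_{\epsilon\in\{\pm 1\}^k}\sum_j|x_j-a\epsilon_j|^p\le k\vp^p$; choosing $\epsilon_j=\sign x_j$ minimizes each summand, yielding the representation $\hat B_p(a,\vp)=\{\xx\in\R^k:\sum_jF(x_j^2)\le k\vp^p\}$ with $F(u):=|\sqrt u-a|^p$. A direct chain-rule computation shows $F$ is convex on $[0,\infty)$ when $p\ge2$: writing $F(u)=G(\sqrt u)$ with $G(t)=|t-a|^p$, one obtains that $F''(u)\ge 0$ is equivalent to $tG''(t)-G'(t)\ge 0$, which in each of the regimes $t<a$ and $t>a$ factors as a nonnegative multiple of $(p-2)t+a\ge0$. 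Since $\sum_jF(u_j)$ is Schur-convex on $[0,\infty)^k$ for any convex $F$, the set $\hat B_p(a,\vp)$ is Schur${}^2$-convex.

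For $\check B_p(a,\vp)$ with $p\in[1,2]$ I observe that $G_k\ee_1=\{\pm\ee_j:1\le j\le k\}$, so $\xx\in\check B_p(a,\vp)$ iff some $j$ satisfies $\sum_{i\ne j}|x_i|^p+||x_j|-a|^p\le k\vp^p$; rewriting the left side as $\sum_i|x_i|^p+\tilde\psi(|x_j|)$ with $\tilde\psi(t):=|t-a|^p-t^p$, and checking directly that $\tilde\psi$ is strictly decreasing on $[0,\infty)$ for $p\ge1$ and $a\ge0$, the optimal $j$ is the one maximizing $|x_j|$. Thus $\check B_p(a,\vp)=\{\xx:h(\xx)\le k\vp^p\}$ where, with $u_i=x_i^2$ and $\eta(M):=|\sqrt M-a|^p-M^{p/2}$, one has $h(\xx)=\sum_iu_i^{p/2}+\eta(\max_ju_j)$. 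I then claim $h$ is Schur-concave in $\uu$: the first summand is Schur-concave because $u\mapsto u^{p/2}$ is concave for $p\le2$, and the second is Schur-concave because $\max_j$ is Schur-convex on $[0,\infty)^k$ while $\eta$ is non-increasing---the latter following from the explicit computation of $\eta'(M)$, which in the only nontrivial regime $\sqrt M\ge a$ reduces to the obvious inequality $(\sqrt M-a)^{p-1}\le M^{(p-1)/2}$. Hence $\check B_p(a,\vp)$ is Schur${}^2$-concave.

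The main technical obstacle is the $\check B_p$ case, where the appearance of $\max_j$ renders $h$ non-smooth across ties $\{u_i=u_j\}$; this rules out a naive Schur--Ostrowski argument, and motivates the coordinate-free decomposition of $h$ into two Schur-concave summands used above. The closing statements about Corollary~\ref{cor:schur} then follow automatically, applied to $\hat B_p(a,\vp)$ directly and to the complement of $\check B_p(a,\vp)$.
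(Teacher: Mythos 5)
Your proof is correct. For $\hat B_p(a,\vp)$ you do essentially what the paper does: reduce membership to $\sum_j F(x_j^2)\le k\vp^p$ with $F(u)=|\sqrt u-a|^p$, check convexity of $F$ for $p\ge2$, and invoke the Hardy--Littlewood--P\'olya criterion; your only addition is the explicit verification that $F''\ge0$ reduces to $(p-2)t+a\ge0$, which the paper merely asserts. For $\check B_p(a,\vp)$, however, you take a genuinely different route. The paper appeals to Muirhead's lemma to reduce to a transfer between two squared coordinates and then checks that $x\mapsto|x-a|^p+(r^2-x^2)^{p/2}$ is nonincreasing on $[\frac r{\sqrt2},r]$; this is short but leaves implicit the bookkeeping of which coordinate carries the shift $a$ and whether that choice survives the two-coordinate transfer. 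You instead write the membership function globally as $h(\uu)=\sum_i u_i^{p/2}+\eta(\max_j u_j)$ with $\eta(M)=|\sqrt M-a|^p-M^{p/2}$, using the monotonicity of $t\mapsto|t-a|^p-t^p$ to identify the optimal shifted coordinate as the one of largest modulus, and then observe that $h$ is Schur-concave as a sum of two Schur-concave pieces (a sum of concave functions of the coordinates, plus a nonincreasing function of the Schur-convex $\max$). This buys a uniform treatment of all cases, avoids both the Muirhead reduction and any smoothness issues at ties, and is arguably cleaner than the paper's two-coordinate computation; the paper's version, in exchange, is a one-line derivative estimate. One cosmetic point: $\tilde\psi(t)=|t-a|^p-t^p$ is only nonincreasing, not strictly decreasing, when $a=0$ or when $p=1$ and $t\ge a$, but nonincreasing is all your argument uses, so nothing breaks. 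The concluding appeal to Corollary~\ref{cor:schur} is as in the paper.
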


%

Proposition~\ref{prop:pq-means} is illustrated in Figure~\ref{fig2}, for dimension $k=2$: \\ 
$\PP\big(\ZZ\in B_{2,-0.4}(1)+11R^{\pi/5}\ee_1\big)<\PP\big(\ZZ\in B_{2,-0.4}(1)+11R^{\pi/20}\ee_1\big)$, where $R^t$ denotes the rotation through angle $t$. 
In fact, these probabilities are very different from each other: the first is ${}\approx1.5\times10^{-14}$, while the second is $\approx1.4\times10^{-6}$ (because the set $B_{2,-0.4}(1)$ is rather complicated, it takes about 5 min to compute either one of these two probabilities in Mathematica on a standard Core 2 Duo laptop, even after some preparations). However, the probabilities $\PP\big(\ZZ\in B_{2,-0.4}(1)+R^{\pi/5}\ee_1\big)$ and $\PP\big(\ZZ\in B_{2,-0.4}(1)+R^{\pi/20}\ee_1\big)$ (without the factor $11$) are much closer to each other: $\approx0.5250$ and $\approx0.5268$, respectively. 
\begin{figure}[htbp]
	\centering
		\includegraphics[width=0.60\textwidth]{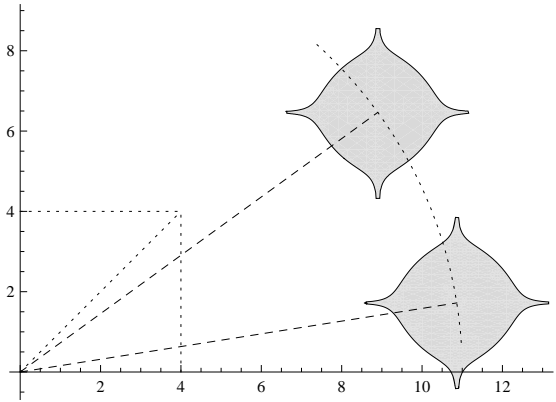}
	\caption{Of the two congruent copies --- $B_{2,-0.4}(1)+11R^{\pi/5}\ee_1$ and $B_{2,-0.4}(1)+11R^{\pi/20}\ee_1$ --- of $B_{2,-0.4}(1)$, the one closer to 
the bisector of the quadrant has a smaller Gaussian measure.}
	\label{fig2}
\end{figure}


Let now $\XX,\XX_1,\XX_2,\dots$ be independent identically distributed random vectors in $\R^k$, with 
a distribution indexed by 
the unknown mean vector $\thb$ of $\XX$;  
let $\E_\thb$ and $\PP_\thb$ denote the corresponding expectation and probability functionals, so that $\E_\thb\XX=\thb$ for all $\thb\in\R^k$. 
Suppose that the covariance matrix 
\begin{equation*}
\Si(\thb):=
\Cov_\thb\XX	
\end{equation*}
is finite, nonsingular, and continuous in $\thb$ in a neighborhood $\mathcal{V}$ of the zero vector $\0$ in $\R^k$. Suppose also that   
\begin{equation*}
\rho_3:=\sup_{\thb\in\mathcal{V}}\E_\thb\|\Si(\thb)^{-1/2}(\XX-\thb)\|^3<\infty; 	
\end{equation*}
here and in what follows, $\|\cdot\|$ denotes the usual Euclidean norm in $\R^k$.

Consider testing the hypothesis $H_0\colon\thb=\0$ versus the alternative $H_1\colon\thb\ne\0$, 
based on the statistic $\Si(\0)^{-1/2}\overline{\XX}_n$, where $\overline{\XX}_n:=\frac1n\sum_{i=1}^n\XX_i$. 
At that, $\Si(\0)$ is supposed to be known. 
Moreover, let us assume that 
\begin{equation*}
	\Si(\0)=I_k,
\end{equation*}
the $k\times k$ identity matrix; this assumption does not diminish generality, since one may replace $\XX_i$ by $\Si(\0)^{-1/2}\XX_i$. 

Consider next tests of the form   
\begin{equation*}
	\de_{n,p}:=\de_{n,p,c}:=\bigii{\sqrt{n}\,\no{\overline{\XX}_n}_p>c}, 
\end{equation*}
where $\ii\cdot$ denotes the indicator function, 
$c\in\R$, $p\in[-\infty,\infty]$, and $\no{\cdot}_p$ is the $p$-mean defined above; let us refer to the tests $\de_{n,p}$ as the $p$-mean tests. 

Take any real numbers $\al$ and $\be$ such that $0<\al<\be<1$. 
Next, define a positive real number $c_p=c_{p,\al}$ as the only root $c$ of the equation $\PP(\no{\ZZ}_p>c)=\al$. 

Finally, take any $\no\cdot_2$-unit vector $\uu\in\R^k$ and let $\s_p=\s_{p;\uu}=\s_{p;\al,\be,\uu}$ be the vector $\s$ in the direction of $\uu$ such that $\PP(\no{\ZZ+\s}_p>c_p)=\be$ -- provided that such a vector $\s$ exists; if it exists, is must be unique. It is easy to see that $\s_p$ exists for all $p\in[0,\infty]$ and all direction vectors $\uu$ -- cf.\ \cite[Proposition~3.8]{d_to_infty}; in particular, $\s_2$ exists for all $\uu$. 

Then an appropriately defined Pitman asymptotic relative efficiency of the $p$-mean tests $(\de_{n,p})$ versus the $2$-mean tests $(\de_{n,2})$ for the alternative mean vectors $\thb\ne\0$ in the direction of a given $\no\cdot_2$-unit vector $\uu\in\R^k$ 
can be expressed as follows (cf.\ 
\cite[Proposition~3.10]{d_to_infty}): 
\begin{equation}\label{eq:a_p2}
\are_{p,2}:=\are_{p,2,\uu}
:=\are_{p,2,\uu}(\al,\be)=\frac{\|\s_2\|^2}{\|\s_p\|^2} 	
\end{equation}
if $\s_p$ exists; otherwise, it is reasonable to let $\are_{p,2}:=0$ -- cf.\ Definitions~2.10 and 3.9 
in \cite{d_to_infty}. 

Based on Corollary~\ref{cor:schur}, we shall prove the following: 

\begin{corollary}\label{cor:are schur2}
$\are_{p,2,\uu}$ is Schur${}^2$-convex in $\uu$ for each $p\in[2,\infty]$ and Schur${}^2$-concave in $\uu$ for each $p\in[-\infty,2]$. 
Therefore, for each $p\in[-\infty,\infty]$, the value of $\are_{p,2,\uu}$ for any $\no\cdot_2$-unit vector $\uu$ lies between the value of $\are_{p,2,\uu}$ for the ``diagonal'' $\no\cdot_2$-unit vector $\one$ and that for ``coordinate'' $\no\cdot_2$-unit vector $\sqrt{k}\,\ee_1$. 
\end{corollary}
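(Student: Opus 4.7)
The plan is to reduce Corollary~\ref{cor:are schur2} to Corollary~\ref{cor:means2} via the defining equation for $\s_p$. First, writing $\s_p=t_p(\uu)\,\uu$ with $t_p(\uu):=\|\s_p\|>0$, one has $\are_{p,2,\uu}=t_2^2/t_p(\uu)^2$; since orthogonal invariance of $\ZZ$ and of $\no\cdot_2$ makes $\PP(\no{\ZZ+t\uu}_2>c_2)$ depend only on $t$, the quantity $t_2$ is independent of $\uu$. So it will suffice to prove that $\uu\mapsto t_p(\uu)^2$ is Schur${}^2$-concave for $p\in[2,\infty]$ and Schur${}^2$-convex for $p\in[-\infty,2]$, because reciprocation of a strictly positive function reverses these two properties.

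Next, I would set $P_p(\s):=\PP(\no{\ZZ+\s}_p>c_p)=1-\PP(\no{\ZZ+\s}_p\leq c_p)$ and invoke Corollary~\ref{cor:means2}, together with the elementary fact that complementation of $[0,1]$-valued functions exchanges Schur${}^2$-convexity with Schur${}^2$-concavity, to conclude that $P_p$ is Schur${}^2$-convex in $\s$ for $p\in[2,\infty]$ and Schur${}^2$-concave in $\s$ for $p\in[-\infty,2]$. The key additional point is that majorization is preserved under positive scaling: if $\uu^2\succeq\vv^2$ then $(t\uu)^2=t^2\uu^2\succeq t^2\vv^2=(t\vv)^2$ for every $t>0$. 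Hence, for $\no\cdot_2$-unit vectors with $\uu^2\succeq\vv^2$, the inequality $P_p(t\uu)\geq P_p(t\vv)$ holds at every $t>0$ when $p\geq2$, and the reverse inequality when $p\leq2$.

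In the case $p\geq2$, evaluating at $t=t_p(\uu)$ gives $\be=P_p(t_p(\uu)\uu)\geq P_p(t_p(\uu)\vv)$. The map $t\mapsto P_p(t\vv)$ is continuous and nondecreasing on $[0,\infty)$, and the uniqueness of $\s_p$ recalled from \cite[Proposition~3.8]{d_to_infty} forces it to pass strictly through the level $\be$ at $t=t_p(\vv)$; combined with $P_p(t_p(\uu)\vv)\leq\be$, this will yield $t_p(\uu)\leq t_p(\vv)$. Hence $t_p^2$ is Schur${}^2$-concave and $\are_{p,2,\uu}$ is Schur${}^2$-convex. The case $p\leq2$ proceeds symmetrically, yielding $t_p(\uu)\geq t_p(\vv)$ and Schur${}^2$-concavity of $\are_{p,2,\uu}$; the convention $\are=0$ when $\s_p$ fails to exist (possible only for $p<0$) is consistent because $P_p(t\uu)\leq P_p(t\vv)$ shows that $\s_p(\uu)$ can exist only if $\s_p(\vv)$ does. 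The concluding assertion of the corollary then follows immediately from the sandwich $\one\preceq\uu^2\preceq k\ee_1=(\sqrt{k}\,\ee_1)^2$ recorded in \eqref{eq:one<u<e}.

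The delicate ingredient I expect to be the main obstacle is the monotonicity of $t\mapsto P_p(t\vv)$ in $t$, which is \emph{not} implied by Corollary~\ref{cor:means2} itself -- majorization cannot compare $t_1^2\vv^2$ and $t_2^2\vv^2$ when $t_1\neq t_2$ -- so it must be imported from the existence/uniqueness analysis of $\s_p$ in \cite[Proposition~3.8]{d_to_infty} (for $p\geq1$ it also follows directly from Anderson's theorem applied to the centrally symmetric convex body $B_p(c_p)$).
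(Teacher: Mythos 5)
Your proposal is correct and follows essentially the same route as the paper: it reduces the claim to Corollary~\ref{cor:means2} applied along the ray $t\uu$ together with a separate continuity/strict-monotonicity fact for $t\mapsto\PP(\no{\ZZ+t\vv}_p>c_p)$ (which the paper isolates as Lemma~\ref{lem:mono} and proves via Jogdeo's theorem, rather than importing it as you do), handles the non-existence of $\s_{p;\uu}$ by the same comparison, and concludes via \eqref{eq:one<u<e}. The only differences are cosmetic (direct argument versus proof by contradiction, and the sourcing of the monotonicity lemma), and you correctly identified that this monotonicity is the one ingredient not supplied by Corollary~\ref{cor:means2}.
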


Of course, for $k=1$ the $p$-mean is the same for all values of $p$. So, the least nontrivial dimension is $k=2$, in which case the possible values of $\are_{p,2,\uu}$ for $\al=0.05$ and $\be=0.95$ are shown in the left half of Figure~\ref{fig:are}, which illustrates Corollary~\ref{cor:are schur2}. 

\begin{figure}[htbp]
	\centering
	\includegraphics[scale=.65]{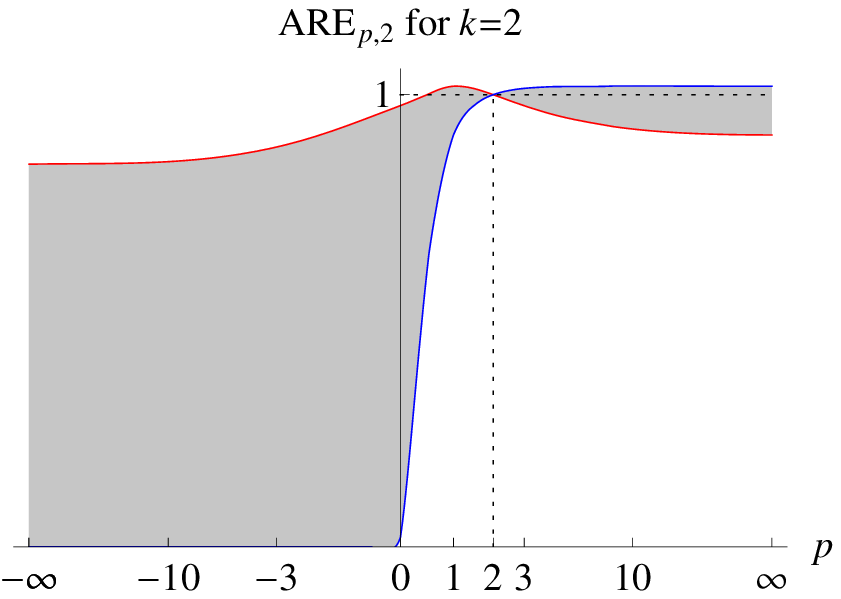}
	\hspace*{.4cm}
	\raisebox{-4pt}{
	\includegraphics[scale=.65]{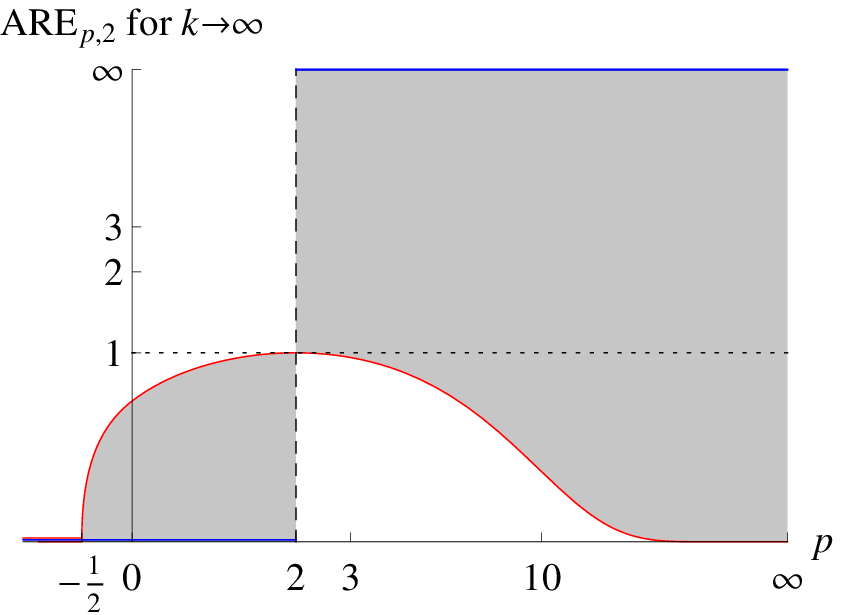}
	}
	\caption{On the left: the values of $\are_{p,2,\uu}$ for $k=2$, $\al=0.05$, $\be=0.95$; 
	on the right: the values of $\are_{p,2,\uu}$ for $k\to\infty$; 
	the horizontal and 
	vertical scales here are nonlinear, with points $(p,\are_{p,2,\uu})$ represented by points  $\big(\psi(p/4),\psi(\are_{p,2,\uu})\big)$ (shaded), where $\psi(x):=2x/(2|x|+3)$, so that $\psi(x)$ increases from $-1$ to $0$ to $1$ as $x$ increases from $-\infty$ to $0$ to $\infty$. 
The values of $\are_{p,2,\uu}$ for the ``diagonal'' directions $\uu$ are represented by the red curve; for the ``coordinate'' $\uu$, by the blue curve for $k=2$ and by the two horizontal blue lines for $k\to\infty$.}
	\label{fig:are}
\end{figure}

Note that for $k=2$ and any given nonzero vector $\uu\in\R^2$ one has $\are_{\infty,2,\uu}=\are_{1,2,R^{\pi/4}\uu}$, where $R^{\pi/4}$ is the operator of rotation through the angle $\pi/4$; \break 
this follows because  $B_1(\vp)=R^{\pi/4}B_\infty(\vp\sqrt2)$. 
In particular, 
$\are_{1,2,(1,1)}=\break
\are_{\infty,2,(\sqrt2,0)}=1.0317\dots$ for $k=2$, $\al=0.05$, and $\be=0.95$. 
It thus appears that, for such $k$, $\al$, and $\be$, the $p$-mean test can at best outperform the LRT by about 3.2\%, which happens for $p=\infty$ and the ``coordinate'' directions, as well as for $p=1$ and the ``diagonal'' directions. 

One may further ask in which directions $\uu$ the $p$-mean test outperforms the LRT test, in the sense that $\are_{p,2,\uu}>1$. It appears (at least for $k=2$) that for each $\uu$ there is some $p$ such that $\are_{p,2,\uu}>1$. 
Indeed, the left half of Figure~\ref{fig:d=2} suggests that (again for $k=2$, $\al=0.05$, and $\be=0.95$) in almost all directions $\uu$ either $\are_{2.1,2,\uu}>1$ or $\are_{1.9,2,\uu}>1$. 
\begin{figure}[htbp]
	\centering
\includegraphics[scale=0.5]
{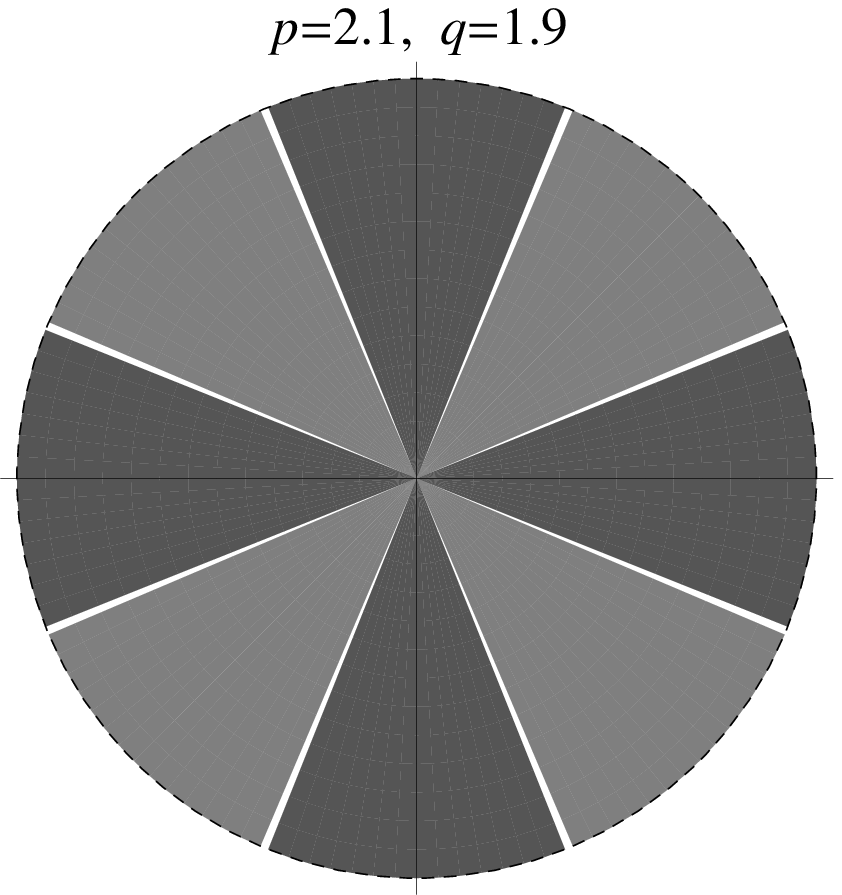}
\hspace*{1cm}
\includegraphics[scale=0.5]
{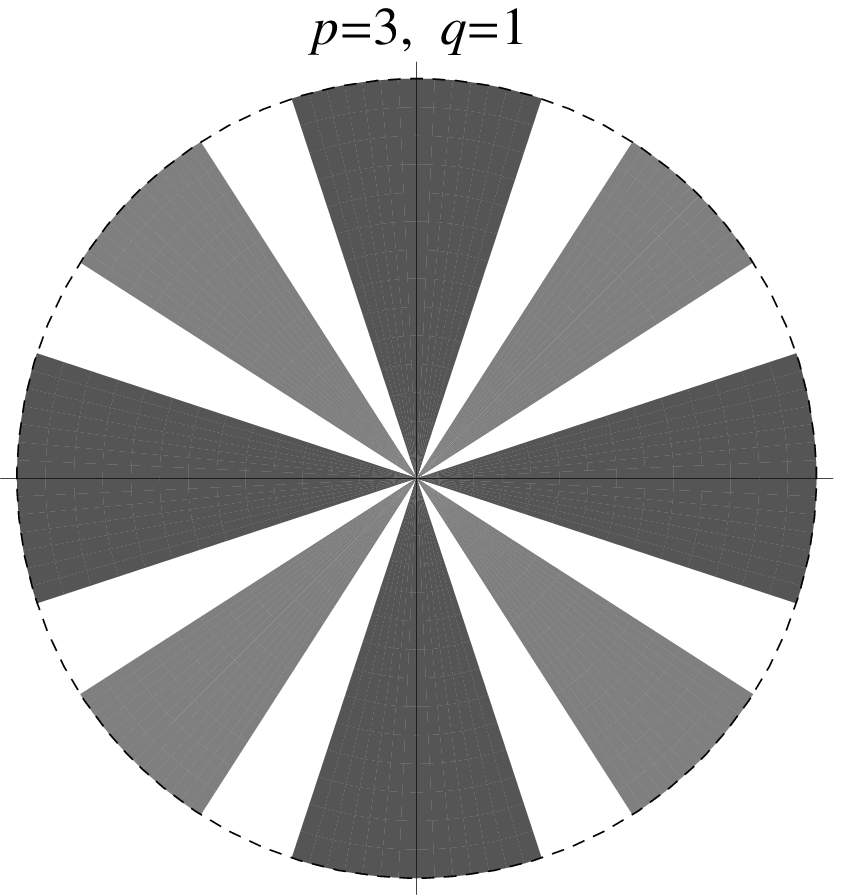}
	\caption{The sectors of directions $\uu$ for $k=2$ where $\are_{p,2,\uu}>1$ (dark\-er gray) 
	and $\are_{q,2,\uu}>1$ (lighter gray); in each of the 8 narrow white sectors, one has $\are_{p,2,\uu}\le1$ and $\are_{q,2,\uu}\le1$. }
	\label{fig:d=2}
\end{figure}
However, at that the improvement in performance is small: $\are_{2.1,2,(\sqrt2,0)}=1.00429\dots$ and  $\are_{1.9,2,(1,1)}=1.00459\dots$. 
Recall also that the maximum improvement \big(for $k=2$, $\al=0.05$, and $\be=0.95$, over all directions $\uu$ and all $p$\big) of the $p$-mean tests over the LRT test appears to be less than 3.2\%. 

The matter is quite different for large $k$, as shown in the right half of Figure~\ref{fig:are}. 
As one can see, for each $p\in[-\infty,2)$ all possible values of $\are_{p,2,\uu}$ \big(for $k\to\infty$, when the dependence of $\are_{p,2,\uu}$ on $\al$ and $\be$ disappears\big) are less than $1$, for all directions $\uu$ of the alternative vector $\th$; that is, for any $p\in[-\infty,2)$ the $2$-mean, LRT test is always asymptotically better than the $p$-mean test.  
However, for each $p\in(2,\infty]$ values of $\are_{p,2,\uu}$ can be arbitrarily large if the vector $\uu$ is far enough from any ``diagonal'' one. For instance, for $p=3$ the possible values of $\are_{p,2,\uu}$ range from $\approx0.96$ to $\infty$.   
This suggests that (say) the $3$-mean test should generally be preferred to the $2$-mean test -- especially, when the direction of the alternative vector $\thb$ is unknown or, even more so, when the direction of $\thb$ is known to be far from a ``diagonal'' one. One can say that the $3$-mean test is much more robust than the LRT with respect to a few relatively large coordinates of an alternative mean vector $\thb$, while the asymptotic efficiency of the $3$-mean test relative to the LRT never falls below $\approx96\%$.  

However, if the dimension $k$ is fixed while $\al\to0$ and $\be\to1$, then the LRT asymptotically outperforms the $p$-mean test for all $p\in[-\infty,\infty]$: 

\begin{corollary}\label{cor:are<1}
For any $p\in[-\infty,\infty]$ and any $\no\cdot_2$-unit vector $\uu\in\R^k$, 
\begin{equation*}
	\limsup\nolimits_{\al\downarrow0,\,\be\uparrow1}\are_{p,2,\uu}(\al,\be)\le1. 
\end{equation*}
\end{corollary}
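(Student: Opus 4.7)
The plan is to combine the Schur${}^2$-monotonicity of $\are_{p,2,\uu}$ in $\uu$ (Corollary~\ref{cor:are schur2}) with the majorization bounds \eqref{eq:one<u<e} to reduce to two distinguished ``extremal'' $\no\cdot_2$-unit vectors, and then to verify $\limsup\le1$ at each of them by direct asymptotic analysis of $\|\s_p\|$ and $\|\s_2\|$.

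For the reduction, Corollary~\ref{cor:are schur2} makes $\uu\mapsto\are_{p,2,\uu}$ Schur${}^2$-convex for $p\ge2$ and Schur${}^2$-concave for $p\le2$, while \eqref{eq:one<u<e} asserts $\one=\one^2\preceq\uu^2\preceq(\sqrt k\,\ee_1)^2=k\,\ee_1$ for every $\no\cdot_2$-unit $\uu\in\R^k$. Hence $\are_{p,2,\uu}\le\are_{p,2,\sqrt k\,\ee_1}$ when $p\ge2$ and $\are_{p,2,\uu}\le\are_{p,2,\one}$ when $p\le2$, and it suffices to prove $\limsup\le1$ at these two extremal directions. At each of them the $p$-mean test statistic linearises under the alternative: for $\uu=\one$ with $p\in(-\infty,\infty)$, writing $\s_p=t_p\one$ and Taylor-expanding $|Z_j+t_p|^p=t_p^p(1+Z_j/t_p)^p$ for large $t_p$ yields
\begin{equation*}
\no{\ZZ+t_p\one}_p = t_p + \bar Z + O(1/t_p), \qquad \bar Z := \frac{1}{k}\sum_{j=1}^k Z_j \sim N(0,1/k);
\end{equation*}
the power condition $\PP(\no{\ZZ+t_p\one}_p>c_p)=\be$ then forces $\|\s_p\|=t_p\sqrt k=c_p\sqrt k+\Phi^{-1}(\be)+o(1)$, and specialising $p=2$ gives $\|\s_2\|=c_2\sqrt k+\Phi^{-1}(\be)+o(1)$. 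The analogous calculation at $\uu=\sqrt k\,\ee_1$ with $p\in(2,\infty)$---where the $p$-mean is dominated by the single shifted coordinate---produces $\|\s_p\|=k^{1/p}c_p+\Phi^{-1}(\be)+o(1)$. The boundary cases $p\in\{0,\pm\infty\}$ admit entirely analogous expansions via $\ln$ or min/max in place of Taylor series.

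Since $\no\xx_p$ is nondecreasing in $p$, one has $c_p\sqrt k\le c_2\sqrt k$ in the diagonal case and $k^{1/p}c_p\le c_2\sqrt k$ in the coordinate case; moreover, all these quantities share the leading-order asymptotic $\sqrt{2\ln(1/\al)}$ as $\al\downarrow0$, so their pairwise gaps are $o(1)$ while each denominator $+\Phi^{-1}(\be)$ tends to $\infty$ under $(\al,\be)\to(0,1)$. This forces $\|\s_2\|/\|\s_p\|\to1$, hence $\are_{p,2,\uu}\to1$ and \emph{a fortiori} $\limsup\le1$ at each extremal direction, completing the proof by the reduction step. The hard part will be the precise tail analysis needed to establish the $o(1)$ gap between the $(1-\al)$-quantiles of $\no\ZZ_p$ and $\no\ZZ_2$ (equivalently, of $\no\ZZ_p\sqrt k$ and $\|\ZZ\|$) as $\al\downarrow0$, together with uniform control on the Taylor remainders in the expansion of $\no{\ZZ+\s_p}_p$; the boundary parameter values $p\in\{0,\pm\infty\}$ will require separate but routine treatments, where the formula for the leading $\Phi^{-1}(\be)$-term is slightly modified (for instance, by $\sqrt k\,\Phi^{-1}(\be^{1/k})$ in place of $\Phi^{-1}(\be)$ when $p=-\infty$) without affecting the $\limsup\le1$ conclusion.
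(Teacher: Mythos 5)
Your reduction to the two extremal directions via Corollary~\ref{cor:are schur2} and \eqref{eq:one<u<e} is exactly the paper's first step and is correct. The gap lies in the asymptotic analysis at those directions, where you aim for additive expansions with $o(1)$ errors, namely $\|\s_p\|=c_p\sqrt k+\Phi^{-1}(\be)+o(1)$ together with an $o(1)$ gap between $c_p\sqrt k$ and $c_2\sqrt k$. Two steps there do not hold as stated. First, the linearization $\no{\ZZ+t_p\one}_p=t_p+\bar Z+O(1/t_p)$ has to be used on the event where $\bar Z$ is of the order of $t_p-c_p\asymp\Phi^{-1}(\be)/\sqrt k\to\infty$; since $\be\uparrow1$ this is a moderate-deviation rather than a CLT regime, and the quadratic Taylor remainder is of order $Z_j^2/t_p$, which on that event is of order $\Phi^{-1}(\be)^2/(k\,t_p)$. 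When $\be\uparrow1$ fast relative to $\al\downarrow0$ one has $t_p\asymp\Phi^{-1}(\be)/\sqrt k$, so the remainder is of the \emph{same} order as the correction term $\bar Z$ you are keeping; it is not $O(1/t_p)$, let alone $o(1)$. Second, the inference that $c_p\sqrt k$ and $c_2\sqrt k$ ``share the leading-order asymptotic $\sqrt{2\ln(1/\al)}$, so their pairwise gaps are $o(1)$'' is a non sequitur: ratio-equivalent quantities can have unbounded differences. You flag both items yourself as ``the hard part,'' but they are the substance of the claim, and the first is false as stated.

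The paper avoids all of this by never asking for additive precision: since $\are_{p,2,\uu}=\|\s_2\|^2/\|\s_p\|^2$, only one-sided \emph{ratio} asymptotics are needed. It lower-bounds the acceptance probability $1-\be=\PP(\no{\ZZ-\s}_p\le c)$ by a product of marginal probabilities \big(e.g.\ $\PP(|Z_1-s|\le c-1)\,\PP\big(|Z_2|^p+\dots+|Z_k|^p\le c^p-(c-1)^p\big)$ at the coordinate direction and $\PP(|Z_1-s|\le c)^k$ at the diagonal one\big) to get $\|\s_p\|\gsim\sqrt{2\ln(1/\al)}+\sqrt{2\ln(1/(1-\be))}$, and separately obtains $\|\s_2\|\sim\sqrt{2\ln(1/\al)}+\sqrt{2\ln(1/(1-\be))}$ from the $\chi^2$ tail; the quotient is then $\lsim1$. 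I recommend you replace your expansion by such crude two-sided Gaussian tail estimates: the conclusion $\limsup\le1$ does not require, and your argument does not establish, the stronger claim $\are_{p,2,\uu}\to1$. (You should also note, as the paper does, why $\s_p$ exists at the chosen extremal direction, or observe that $\are_{p,2,\uu}=0$ otherwise.)
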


\section{Proofs}\label{proofs}

\begin{proof}[Proof of Theorem~\ref{th:schur}]\ 

(I) 
By re-scaling, without loss of generality (w.l.o.g.), $\si=1$. Also w.l.o.g, $k\ge2$. Assume now that $k\ge3$. For a moment, take any $\xx$ and $\yy$ in $\R^k$ such that $\xx^2\succ\yy^2$; we have to show that then $\E f(\ZZ+\xx)\le \E f(\ZZ+\yy)$.  

A well-known result by Muirhead \cite{muirRJ} (see, e.g., \cite[Remark~B.1.a of Chapter 2]{marsh-ol}) states that, for any vectors $\mathbf{a}$
and $\mathbf{b}$ in $\mathbb{R}^{n}$ such that $\mathbf{a}\succ\mathbf{b}$,
there exist some $m\in\{1,\dots,n\}$ and vectors $\mathbf{a}^{(0)},%
\mathbf{a}^{(1)},\ldots,\mathbf{a}^{(m)}$ in $\mathbb{R}^{n}$ such that 
$\mathbf{a=a}^{(0)}\succ\mathbf{a}^{(1)}\succ\cdots\succ%
\mathbf{a}^{(m)}=\mathbf{b}$ and for each $j\in\left\{ 1,\ldots,m\right\} $
the vectors $\mathbf{a}^{(j-1)}$ 
and $\mathbf{a}^{(j)}$ differ exactly in two
coordinates. 

On the other hand, by the Fubini theorem, 
\begin{equation*}
	\E f(\ZZ+\xx)=\int_{\R^{k-2}}\!\!\!\E f_{z_3+x_3,\dots,z_k+x_k}(Z_1+x_1,Z_2+x_2)\,
	\prod_{j=3}^k\PP(Z_j\in\dd z_j), 
\end{equation*}
where $f_{u_3,\dots,u_k}(u_1,u_2):=f(u_1,u_2,u_3,\dots,u_k)$, which is Schur${}^2$-concave in \break $(u_1,u_2)\in\R^2$ for each point $(u_3,\dots,u_k)\in\R^{k-2}$. 

Thus (using also the $G_k$-symmetry of the function $f$, as described in Remark~\ref{rem:symm}), w.l.o.g.\ $k=2$. 

So, it suffices to verify that 
$
	\E f(\ZZ+\xx_t)
$ 
is nondecreasing in $t\in[0,\frac\pi4]$, 
where 
\begin{equation*}
	\xx_t:=R^t\xx_0=(r\cos t,r\sin t), 
\end{equation*} 
$\xx_0:=(r,0)$, $r\in(0,\infty)$, and $R^t$ is the operator of the rotation through angle $t$, so that 
$R^t\xx=(x_1\cos t-x_2\sin t,\,x_1\sin t+x_2\cos t)$ for any $\xx=(x_1,x_2)\in\R^2$.  
Introducing now the set
\begin{gather*}
	A_1:=\{(u,v)\in\R^2\colon 0<v<u\}, 
\end{gather*}
and again using the $G_k$-symmetry of the function $f$ (for $k=2$), one can write 
\begin{align*}
	\E f(\ZZ+\xx_t)&=\int_{\R^2} f(\yy)\,\frac1{2\pi}\,e^{-\|\yy-R^t\xx_0\|^2/2}\,\dd\yy \\
	&=\frac1{2\pi}\,\int_{\R^2} f(\yy)\,e^{-(\|\yy\|^2+r^2)/2}\,e^{\yy\cdot R^t\xx_0}\,\dd\yy \\
	&=\frac1{2\pi}\,\sum_{g\in G_2}\int_{A_1} f(\yy)\,e^{-(\|\yy\|^2+r^2)/2}\,e^{(g\yy)\cdot R^t\xx_0}\,\dd\yy.  
\end{align*}

Consider now any point $\yy\in A_1$, so that $\yy=(\rho\cos\th,\rho\sin\th)$ for some $\rho\in(0,\infty)$ and $\th\in(0,\frac\pi4)$; then 
\begin{gather*}
	\big\{(g\yy)\cdot R^t\xx_0\colon g\in G_2\big\}
	=\big\{\rho r\cos(\th_i-t)\colon i=1,\dots,8\big\} \\
	\begin{aligned}
	=\big\{&\rho r\,\cos(\th-t),\;\rho r\,\sin(\th+t),\;-\rho r\,\sin(\th-t),\;-\rho r\,\cos(\th+t),\; \\
	&\rho r\,\cos(\th+t),\;\rho r\,\sin(\th-t),\;-\rho r\,\sin(\th+t),\;-\rho r\,\cos(\th-t)\big\}, 
	\end{aligned}
\end{gather*}
where 
\begin{align*}
	\{\th_1,\dots,\th_8\}=&\{ \th,\;\tfrac\pi2-\th,\;\tfrac\pi2+\th,\;\pi-\th,\\
														& -\th,\;\th-\tfrac\pi2,\;-\tfrac\pi2-\th,\;\th-\pi\}. 
\end{align*}
So, 
\begin{align}
	\E f(\ZZ+\xx_t)&=\frac1\pi\,\int_{A_1} e^{-(\rho^2+r^2)/2}\,f_\rho(\th)\,[w_\rho(\th+t)+w_\rho(\th-t)]\,\rho\,\dd\rho\,\dd\th, \notag\\
	&=\frac1\pi\,\int_0^\infty\,\rho\,e^{-(\rho^2+r^2)/2}\,I_\rho(t)\,\dd\rho, \label{eq:qq}	
\end{align}
where 
\begin{align}
f_\rho(\th)&:=f(\rho\cos\th,\rho\sin\th), \notag\\
	w_\rho(u)&:=\cosh(\rho r\cos u)+\cosh(\rho r\sin u), \label{eq:w} \\
	I_\rho(t)&:=\int_0^{\frac\pi4}f_\rho(\th)\,[w_\rho(\th+t)+w_\rho(\th-t)]\,\dd\th \notag\\
	&=\int_{(0,\pi/4]}J_{\rho,\eta}(t)\,\dd f_\rho(\eta) + J_{\rho,\eta}(0)\,f_\rho(0), \label{eq:I}\\
	J_{\rho,\eta}(t)&:=\int_\eta^{\frac\pi4}[w_\rho(\th+t)+w_\rho(\th-t)]\,\dd\th  \notag \\
	&=\int_{\eta+t}^{\frac\pi4+t}w_\rho(u)\,\dd u+\int_{\eta-t}^{\frac\pi4-t}w_\rho(u)\,\dd u;  \notag
\end{align}
the equality \eqref{eq:I} holds by the Fubini theorem --- because, by virtue of the Schur${}^2$-concavity of $f$, the function $f_\rho$ is nondecreasing on $[0,\frac\pi4]$ (for each $\rho\in(0,\infty)$), and so, $f_\rho(\th)=f_\rho(0)+\int_{(0,\th]}\dd f_\rho(\eta)$ for almost all $\th\in[0,\frac\pi4]$ \big(in fact, for all $\th\in[0,\frac\pi4]$ except possibly $\th$ in the at most countable set of the discontinuities of $f_\rho$ on $[0,\frac\pi4]$\big). 
Observe next that 
\begin{align*}
J_{\rho,\eta}'(t):=\tfrac{\partial}{\partial t}J_{\rho,\eta}(t)
&=w_\rho(\tfrac\pi4+t)-w_\rho(\eta+t)-w_\rho(\tfrac\pi4-t)+w_\rho(\eta-t) \\
&=w_\rho(\eta-t)-w_\rho(\eta+t) 
\end{align*}
by \eqref{eq:w}, 
since $\cos(\tfrac\pi4\pm t)=\sin(\tfrac\pi4\mp t)$ for all $t\in\R$. 
So, 
\begin{align*}
J_{\rho,\eta}'(t)=\sum_{j=2}^\infty\frac{(\rho r)^{2j}}{(2j)!}\,[s_j(|\eta-t|)-s_j(\eta+t)],
\end{align*}
where
\begin{equation*}
	s_j(\al):=\cos^{2j}\al+\sin^{2j}\al, 
\end{equation*}
which is easily seen to be increasing in $|\al-\frac\pi4|\in[0,\frac\pi4]$ for each $j=2,3,\dots$. 
On the other hand, 
\begin{equation*}
	0\le|\eta+t-\tfrac\pi4|<\big||\eta-t|-\tfrac\pi4\big|\le\tfrac\pi4
\end{equation*}
for all $\eta$ and $t$ in $(0,\frac\pi4)$. 

We conclude that $J_{\rho,\eta}'(t)>0$ for all $t\in(0,\frac\pi4)$, so that $J_{\rho,\eta}(t)$ is increasing in  $t\in[0,\frac\pi4]$. 
So, by \eqref{eq:I}, $I_\rho(t)$ is nondecreasing in  $t\in[0,\frac\pi4]$ \big(in fact, $I_\rho(t)$ is strictly increasing in $t\in[0,\frac\pi4]$ unless $f_\rho(\th)=f(\rho\cos\th,\rho\sin\th)$ is constant in  $\th\in(0,\frac\pi4]$\big). 

Thus, in view of \eqref{eq:qq} and \eqref{eq:I}, $\E f(\ZZ+\xx_t)$ is indeed nondecreasing in $t\in[0,\frac\pi4]$. 
This proves part (I) of the theorem. 

(II) Suppose that the function $f^{(\si)}$ is not strictly Schur${}^2$-concave. Then, by the just proved part (I) of the theorem and the definition of the strict Schur${}^2$-concavity, there exist $\aa$ and $\bb$ in $\R^k$ such that $\aa^2\succ\bb^2$ and $f^{(\si)}(\aa)=f^{(\si)}(\bb)<\infty$; fix any such $\aa$ and $\bb$. 
For a moment, fix also any $\vp\in(0,\si)$. 
W.l.o.g., $k\ge2$. 

Let $\vpi_\si$ denote the probability density function of $\si\ZZ$, so that 
\begin{equation*}
	\vpi_\si(\uu)=\Big(\frac1{\si\sqrt{2\pi}}\Big)^k\,\exp\Big(-\frac{\|\uu\|^2}{2\si^2}\Big) 
\end{equation*}
for all $\uu\in\R^k$. 
Then
\begin{equation*}
	f^{(\vp)}(\xx)=\int_{\R^k} f(\uu+\aa)\,\vpi_\si(\uu)\,\frac{\vpi_\vp(\aa-\xx+\uu)}{\vpi_\si(\uu)}\,\dd\uu
\end{equation*}
for all $\xx\in\R^k$, and the ratio $\frac{\vpi_\vp(\aa-\xx+\uu)}{\vpi_\si(\uu)}$ is continuous in $\xx$ and bounded over all $\xx$ in any bounded subset of $\R^k$ and all $\uu\in\R^k$. 
Also, 
\begin{equation}\label{eq:finite}
	\int_{\R^k} f(\uu+\aa)\,\vpi_\si(\uu)\dd\uu = f^{(\si)}(\aa)<\infty.
\end{equation} 
So, the function $f^{(\vp)}$ is finite and, by dominated convergence, continuous on $\R^k$. 
It also follows from \eqref{eq:finite} that  
$f$ is locally integrable and hence finite almost everywhere on $\R^k$. 

Next, note the semigroup property of the operator family $T^{\si^2}\colon f\mapsto f^{(\si)}$; namely, $T^{\si^2}=T^{\si^2-\vp^2}T^{\vp^2}$, so that  
\begin{equation*}
	f^{(\si)}(\aa)=\E f^{(\vp)}\big(\sqrt{\si^2-\vp^2}\,\ZZ+\aa\big). 
\end{equation*}
At that, by part (I) of the theorem, the function $f^{(\vp)}$ is Schur${}^2$-concave and, in particular, has the group symmetry property. 

Re-tracing now the lines of the proof of part (I) --- and, especially, the penultimate paragraph of that proof ---  with $f^{(\vp)}$ and $\sqrt{\si^2-\vp^2}$ in place of $f$ and $\si$, respectively, one finds that, for both of the conditions $\aa^2\succ\bb^2$ and $f^{(\si)}(\aa)=f^{(\si)}(\bb)<\infty$ to hold, it is necessary that 
$f^{(\vp)}(\rho\cos\th,\rho\sin\th,u_3,\dots,u_k)$ be constant in $\th\in(0,\frac\pi4]$ for almost any given  $(\rho,u_3,\dots,u_k)\in(0,\infty)\times\R^{k-2}$; hence, by the noted continuity and group symmetry of $f^{(\vp)}$, the expression $f^{(\vp)}(\rho\cos\th,\rho\sin\th,u_3,\dots,u_k)$ must be constant in all $\th\in[0,2\pi]$ for each \break 
$(\rho,u_3,\dots,u_k)\in(0,\infty)\times\R^{k-2}$. 
In other words, for 
\begin{equation*}
	h(v_1,\dots,v_k):=h^{(\vp)}(v_1,\dots,v_k):=f^{(\vp)}(\sqrt{v_1},\dots,\sqrt{v_k})
\end{equation*}
the assertion $\A_2$ holds, if 
for each $j\in\{2,\dots,k\}$ one defines $\A_j$ as the following assertion: 
\begin{center}
\begin{parbox}{4.2in}
{for any $(u_1,\dots,u_k)$ and $(v_1,\dots,v_k)$ in $[0,\infty)^k$ such that $u_1+\dots+u_j\break
=v_1+\dots+v_j$ and $u_i=v_i$ for all natural $i\in[j+1,k]$, one has $h(u_1,\dots,u_k)=h(v_1,\dots,v_k)$. 
}
\end{parbox}
\end{center}
%
%
Take now any natural $j\in[2,k-1]$ (if such a number $j$ exists) and assume that $\A_j$ holds. Take then any $(u_1,\dots,u_k)$ and $(v_1,\dots,v_k)$ in $[0,\infty)^k$ such that $u_1+\dots+u_{j+1}=v_1+\dots+v_{j+1}$ and $u_i=v_i$ for all natural $i\in[j+2,k]$. We want to show that then $h(u_1,\dots,u_k)=h(v_1,\dots,v_k)$. W.l.o.g.\ $u_{j+1}\le v_{j+1}$. Replace now $(v_1,\dots,v_k)$ by 
\begin{align*}
(\tilde v_1,\dots,\tilde v_k) &:=(v_1+v_{j+1}-u_{j+1},v_2,\dots,v_j,u_{j+1},v_{j+2},\dots,v_k) \\
&=(v_1+v_{j+1}-u_{j+1},v_2,\dots,v_j,u_{j+1},u_{j+2},\dots,u_k). 
\end{align*} 
Then, by $\A_2$ and the permutation symmetry of $h$, one has $h(\tilde v_1,\dots,\tilde v_k)=h(v_1,\dots,v_k)$, since $\tilde v_1+\tilde v_{j+1}=v_1+v_{j+1}$. On the other hand, $\A_j$ implies $h(\tilde v_1,\dots,\tilde v_k)=h(u_1,\dots,u_k)$. So, it is proved by induction that $\A_k$ holds. 

This means that the function $f^{(\vp)}$ is spherically symmetric, for each $\vp\in(0,\si)$. Now we are going to use the following approximation property of the convolution:
\begin{equation}\label{eq:appr}
	f^{(\vp)}\to f\quad\text{as $\vp\downarrow0$ almost everywhere on $\R^k$. }
\end{equation}
Variants of this property are quite well known, under different conditions on $f$; cf.\ e.g.\ \cite[Theorem~8.15]{folland}. However, as it is often the case in such situations, it appears easier to prove \eqref{eq:appr} than to find a quite ready to use result. We shall provide a proof of \eqref{eq:appr} a little later. 

Since $f^{(\vp)}$ is spherically symmetric for each $\vp\in(0,\si)$, \eqref{eq:appr} implies that there exist a subset (say $\Theta$) of $[0,\infty)$ of full Lebesgue measure and a  function $F\colon[0,\infty)\to[0,\infty)$ such that 
$f^{(\vp)}(\xx)\to F(\|\xx\|)$ as $\vp\downarrow0$ for each $\xx\in\R^k$ such that $\|\xx\|\in\Theta$. 
For all $\xx\in\R^k$, let now $\tilde f(\xx):=F(\|\xx\|)$ if $\|\xx\|\in\Theta$ and $\tilde f(\xx):=0$ otherwise. Then the function $\tilde f$ is spherically symmetric and, by \eqref{eq:appr}, $f$ is equivalent to $\tilde f$.  

It remains to prove \eqref{eq:appr}. Take any Lebesgue point $\xx$ of the function $f$ and also take any $\g\in(0,\infty)$, so that there exists some $\de_0=\de_0(\xx,\g)\in(0,\infty)$ such that 
\begin{equation}\label{eq:Leb}
	\Big|\int_{B_\de}\big(f(\xx+\uu)-f(\xx)\big)\dd\uu\Big|\le\g\int_{B_\de}\dd\uu
	\quad\text{for all $\de\in(0,\de_0]$,}
\end{equation}
where $B_\de:=\{\uu\in\R^k\colon\|\uu\|<\de\}$, the $\de$-ball in $\R^k$. 
Next, write 
\begin{align*}
	|f^{(\vp)}(\xx)-f(\xx)|&\le I_{1,\vp}+I_{2,\vp},\quad\text{where}\\
	I_{1,\vp}&:=\Big|\int_{B_{\de_0}}\big(f(\xx+\uu)-f(\xx)\big)\,\vpi_\vp(\uu)\dd\uu\Big|, \\
	I_{2,\vp}&:=\int_{\R^k\setminus B_{\de_0}}\big(f(\xx+\uu)+f(\xx)\big)\,\vpi_\vp(\uu)\dd\uu
\end{align*}
(recall that the values of $f$ are in $[0,\infty]$). 
Observe that, for each $\uu\in\R^k$ with $\|\uu\|>\de_0$, the expression $\vpi_\vp(\uu)$ monotonically decreases to $0$ as $\vp$ decreases from $\de_0/\sqrt k$ to $0$. 
So, by dominated convergence, 
$
	I_{2,\vp}\to0\quad\text{as $\vp\downarrow0$.}
$ 

To bound $I_{1,\vp}$, observe that, for each $c\in\R$, the set $\{\uu\in\R^k\colon\vpi_\vp(\uu)>c\}$ is the (possibly empty) ball $B_{\la(\vp,c)}$ of a certain (possibly zero) radius $\la(\vp,c)$. Hence, in view of \eqref{eq:Leb}, 
\begin{align*}
I_{1,\vp}&=\Big|\int_{B_{\de_0}}\big(f(\xx+\uu)-f(\xx)\big)\dd\uu\,\int_0^\infty\ii{\vpi_\vp(\uu)>c}\dd c\Big| \\
&=\Big|\int_0^\infty\dd c \int_{B_{\de_0\wedge\la(\vp,c)}}\big(f(\xx+\uu)-f(\xx)\big)\dd\uu\Big| \\
&\le\g\,\int_0^\infty\dd c \int_{B_{\de_0\wedge\la(\vp,c)}}\dd\uu \\
&=\g\,\int_{B_{\de_0}}\dd\uu\,\int_0^\infty\ii{\vpi_\vp(\uu)>c}\dd c \\
&=\g\,\int_{B_{\de_0}}\vpi_\vp(\uu)\dd\uu
\,\le\,\g. 
\end{align*}
Thus, $\limsup_{\vp\downarrow0}|f^{(\vp)}(\xx)-f(\xx)|\le\g$. Since $\g\in(0,\infty)$ was chosen arbitrarily, one has $f^{(\vp)}(\xx)\to f(\xx)$ as $\vp\downarrow0$, for each Lebesgue point $\xx$ of the locally integrable function $f$. 
It remains to recall that the Lebesgue set of any locally integrable function is of full Lebesgue measure (see e.g.\ \cite[Theorem~3.20]{folland}). 
\end{proof}


\begin{proof}[Proof of Proposition~\ref{prop:counterex}]
Take any integer $k\ge2$. Take then any $\vp\in(0,\sqrt k-1)$ and let $R:=\frac{\vp^2+k-1}{2\vp}$ and $r:=R-1-\vp=\frac{k-(1+\vp)^2}{2\vp}>0$. Introduce also the vectors $\xx_0:=r\ee_1$ and $\xx_1:=\frac r{\sqrt k}\,\one$. 
Let $\XX$ be any random vector uniformly distributed in the Euclidean ball $B(R):=\{\xx\in\R^k\colon\|\xx\|\le R\}$. 
Finally, let $A:=B_\infty(1)=\{\xx\in\R^k\colon\max_j|x_j|\le1\}$. 
Then $\XX$ has a spherically symmetric unimodal distribution and $A$ is Schur${}^2$-convex, by Proposition~\ref{prop:means}. 

Next, for any $\yy\in A$  
\begin{equation*}
	\|\yy+\xx_0\|^2=(y_1+r)^2+y_2^2+\dots+y_k^2\le(1+r)^2+k-1=(R-\vp)^2+k-1=R^2. 
\end{equation*}
It follows that $A+\xx_0\subseteq B(R)$, whence 
\begin{equation*}
	\PP(\XX\in A+\xx_0)=\frac{\mes(A+\xx_0)}{\mes B(R)}=\frac{\mes A}{\mes B(R)}, 
\end{equation*}
where $\mes$ stands for the Lebesgue measure in $\R^k$. 
One the other hand, 
\begin{equation*}
	\PP(\XX\in A+\xx_1)<\frac{\mes A}{\mes B(R)}, 
\end{equation*}
because the point $(1+\frac r{\sqrt k})\one$ is a vertex of the shifted cube $A+\xx_1$, and the distance from this point to the ball $B(R)$ is $\sqrt k+r-R=\sqrt k-1-\vp>0$. 
So, $\PP(\XX\in A+\xx_1)<\PP(\XX\in A+\xx_0)$ while $\xx_1^2\preceq\xx_0^2$. 
Thus, the function 
$
\R^k\ni\xx\mapsto\PP(\XX\in A+\xx) 	
$
is not Schur${}^2$-concave. 
This proof is illustrated by Figure~\ref{fig:counterex}. 
\begin{figure}%
\includegraphics[scale=.5]{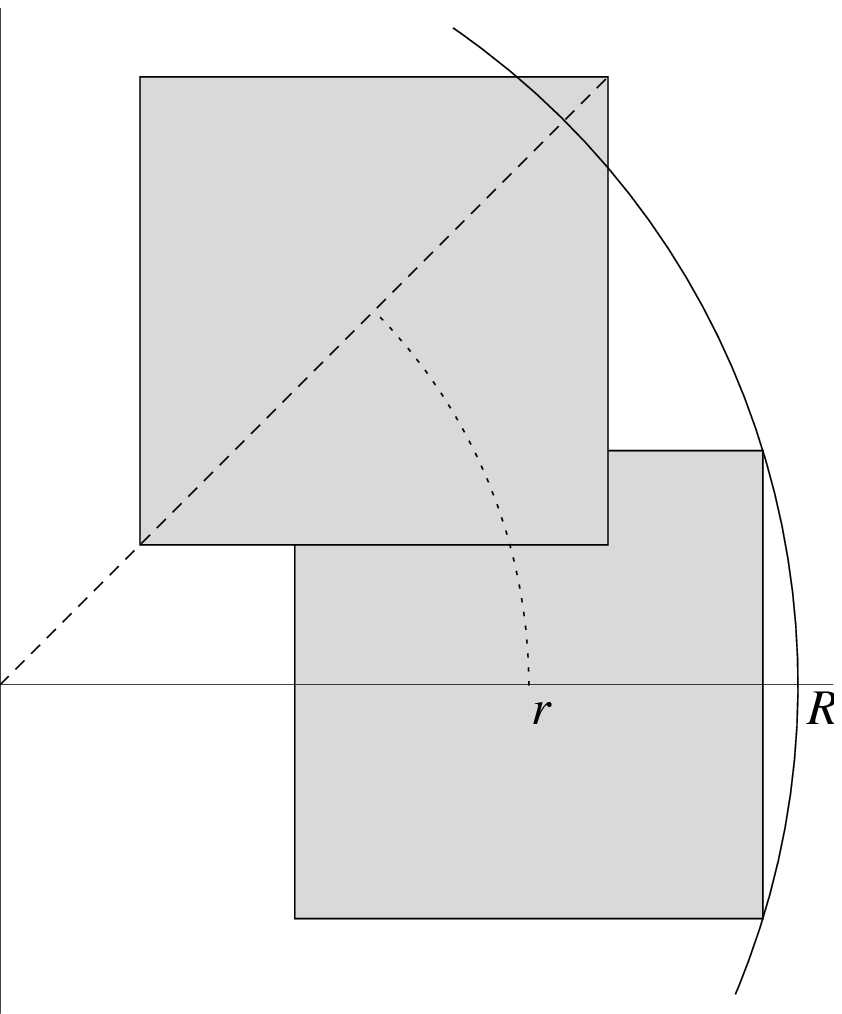}%
\caption{Illustration of the proof of Proposition~\ref{prop:counterex} (for $k=2$ and $\vp=0.15$, whence $R\approx3.41$ and $r\approx2.26$. Part of the square $A+\xx_1$ is seen to be outside the ball $B(R)$, while the square $A+\xx_0$ is entirely contained in $B(R)$. 
Since $\XX$ is uniformly distributed in $B(R)$, it follows that $\PP(\XX\in A+\xx_1)<\PP(\XX\in A+\xx_0)$.
}%
\label{fig:counterex}%
\end{figure}
\end{proof}

\begin{proof}[Proof of Proposition~\ref{prop:pq-means}]
Observe that, for any $\uu\in(0,\infty)^k$, any $i\in\{1,\dots,k\}$, and any real $p$ and $q$ such that $p>q$,  the partial derivative of $\no{(\sqrt{u_1},\dots,\sqrt{u_k})}_{p,q}$ in $u_i$ equals  $pu_i^{-1+p/2}\sum_ju^{q/2}-qu_i^{-1+q/2}\sum_ju^{p/2}$ up to a factor which does not depend on $i$. Now the proof is easy to complete, on recalling the Schur theorem that, for $f(\uu)=f(u_1,\dots,u_k)$ to be Schur-convex in $(u_1,\dots,u_k)$, it is enough that $f(\uu)$ be symmetric in $(u_1,\dots,u_k)$ and 
$(\frac{\partial}{\partial u_i}-\frac{\partial}{\partial u_j})f(\uu)$ be equal in sign to $u_i-u_j$ -- see e.g.\ \cite[Theorem A.4 of Chapter 3]{marsh-ol}. 
\end{proof}

\begin{proof}[Proof of Proposition~\ref{prop:hat,check B}] 
Observe that a point $\xx\in[0,\infty)^k$ is in the set $\hat B_p(a,\vp)$ iff $\sum_i|\sqrt u_i-a|^p\le k\vp^p$, where $u_i:=x_i^2$. 
Next, 
the function $u\mapsto|\sqrt u-a|^p$ is convex on $[0,\infty)$ given any $p\ge2$ and any $a\ge0$. So, by virtue of the $G_k$-symmetry and the Hardy-Littlewood-Polya theorem \cite[Proposition~B.1 of Chapter~4]{marsh-ol}, the set $\hat B_p(a,\vp)$ is Schur${}^2$-convex for $p\ge2$. 

It remains to show that the sets $\check B_p(a,\vp)$ are Schur${}^2$-concave for $p\in[1,2]$. 
By the 
$G_k$-symmetry and the mentioned 
result by Muirhead (\cite[Remark~B.1.a of Chapter 2]{marsh-ol}), it suffices to show that the function $[\frac r{\sqrt2},r]\ni x\mapsto|x-a|^p+(r^2-x^2)^{p/2}$ is nonincreasing, given any $p\in[1,2]$, $r>0$, and $a\ge0$. But the derivative of this function equals in sign $|x-a|^{p-1}\sign(x-a)-xy^{p-2}\le |x-a|^{p-1}\sign(x-a)-x^{p-1}\le0$, where $y:=(r^2-x^2)^{1/2}\le x$. 
\end{proof}

In the proof of Corollary~\ref{cor:are schur2}, we shall use 

\begin{lemma}\label{lem:mono}
For any given $p\in[-\infty,\infty]$, $\vv\in\R^k\setminus\{\0\}$, and $c\in(0,\infty)$, the function 
$
[0,\infty)\ni t\longmapsto\PP(\no{\ZZ+t\vv}_p>c) 	
$ 
is continuously and strictly increasing. 
\end{lemma}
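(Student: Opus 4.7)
The proof plan splits into two parts: continuity of $F(t):=\PP(\no{\ZZ+t\vv}_p>c)$ and its strict monotonicity on $[0,\infty)$.

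For continuity, I would apply dominated convergence to $F(t)=\E\,\ii{\no{\ZZ+t\vv}_p>c}$. For each fixed $\zz\in\R^k$, continuity of the $p$-mean in its argument yields $\ii{\no{\zz+t\vv}_p>c}\to\ii{\no{\zz+t_0\vv}_p>c}$ as $t\to t_0$, except when $\no{\zz+t_0\vv}_p=c$. By positive homogeneity of $\no{\cdot}_p$ and $c>0$, the level set $\{\yy\in\R^k\colon\no\yy_p=c\}$ is contained in a $(k-1)$-dimensional hypersurface and so has Lebesgue measure zero, as does every translate of it. Since $\ZZ$ has a density, the exceptional set of $\zz$ has $\PP$-measure zero, and dominated convergence yields continuity of $F$.

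For strict monotonicity, the structural observation is that $\no\xx_p$ depends only on $(|x_1|,\dots,|x_k|)$ and is non-decreasing in each $|x_j|$, strictly so on $(0,\infty)^k$ for every $p\in\R\setminus\{0\}$. Setting $U_j(t):=|Z_j+tv_j|$, the $U_j(t)$ are independent across $j$; and for any $j$ with $v_j\ne 0$, $U_j(t)$ is strictly stochastically increasing in $t\ge 0$. Indeed, $\PP(U_j(t)>u)=1-\Phi(u-tv_j)+\Phi(-u-tv_j)$, whose $t$-derivative equals $v_j[\phi(u-tv_j)-\phi(u+tv_j)]$, strictly positive for $u>0$, $t\ge 0$ when $v_j>0$ (and symmetrically for $v_j<0$), because $\phi$ is even and strictly decreasing on $(0,\infty)$ while $|u-tv_j|<u+|tv_j|$. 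Since $\vv\ne\0$, some $v_{j_0}\ne 0$.

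For $0\le t<t'$, I would use the inverse-CDF monotone coupling independently in each $j$ to build $(\tilde U_j(t),\tilde U_j(t'))$ with $\tilde U_j(t)\le\tilde U_j(t')$ a.s.\ (strictly a.s.\ for $j=j_0$), the pairs independent across $j$. Coordinatewise monotonicity of $\no{\cdot}_p$ gives $\no{\tilde U(t)}_p\le\no{\tilde U(t')}_p$ a.s., hence $F(t)\le F(t')$; strict inequality amounts to showing that the crossing event $\bigl\{\no{\tilde U(t)}_p\le c<\no{\tilde U(t')}_p\bigr\}$ has positive probability. For finite $p\ne 0$, $\no{\cdot}_p$ is strictly increasing in each coordinate on $(0,\infty)^k$, so the a.s.\ strict increase of $\tilde U_{j_0}$ induces an a.s.\ strict increase of $\no{\tilde U(\cdot)}_p$; together with the fact that $\no{\tilde U(t)}_p$ has an absolutely continuous distribution with density positive on a neighborhood of $c>0$, this delivers positivity of the crossing event. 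The endpoint cases $p\in\{0,\pm\infty\}$ I would dispatch by direct product/complement formulas, e.g.\ $F(t)=1-\prod_{j=1}^k\PP(|Z_j+tv_j|\le c)$ for $p=\infty$ (each factor in $(0,1)$ and strictly decreasing in $t$ when $v_j\ne 0$), and $F(t)=\prod_{j=1}^k\PP(|Z_j+tv_j|>c)$ for $p=-\infty$.

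The main obstacle is the crossing-event step: one must verify that $\no{\tilde U(t)}_p$ assigns no atom to $\{c\}$ and has positive mass on every one-sided neighborhood of $c$. For finite $p\ne 0$ this rests on the smoothness of $\no{\cdot}_p$ on $(0,\infty)^k$ together with positivity of the joint Lebesgue density of $(\tilde U_1(t),\dots,\tilde U_k(t))$ on $(0,\infty)^k$; in the degenerate cases $p\in\{0,\pm\infty\}$ these subtleties are sidestepped by the direct product arguments above.
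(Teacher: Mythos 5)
Your route is genuinely different from the paper's, and necessarily more detailed: the paper obtains the nondecreasing part by citing Jogdeo's axial-unimodality theorem (the set $\{\no{\zz}_p\le c\}$ is axially convex, and the move from $t\vv$ to $t'\vv$ decomposes into axis-parallel steps away from the coordinate hyperplanes), asserts strictness with only the remark that it follows by ``working slightly harder,'' and gets continuity from dominated convergence via the observation that the axis-parallel sections of $\{\zz\colon\no{\zz+\s}_p\le c\}$ are intervals with endpoints continuous in $\s$. Your self-contained coordinatewise monotone coupling of $(|Z_j+tv_j|)_j$ with $(|Z_j+t'v_j|)_j$ replaces the citation and makes the mechanism explicit, and your continuity argument (the level set $\{\no{\yy}_p=c\}$ is Lebesgue-null, by homogeneity of $\no{\cdot}_p$) is a sound variant of the paper's.

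There is, however, a gap at exactly the step you flag as the crux. The implication ``$X<Y$ a.s.\ and $X$ has an absolutely continuous law with density positive near $c$, hence $\PP(X\le c<Y)>0$'' is false in general: take $X$ uniform on $(0,2)$ and set $Y:=\tfrac12(X+c)$ on $\{X<c\}$ and $Y:=X+1$ on $\{X\ge c\}$; then $X<Y$ a.s.\ and $X$ has positive density at $c$, yet $\PP(X\le c<Y)=0$. Since $\PP(X\le c<Y)$ \emph{is} the increment $F(t')-F(t)$ you are trying to bound below, no argument that ignores the joint law of $(X,Y)$ can close this. The fix is available from the structure of your own coupling: one has $\tilde U_j(t')=\Psi_j(\tilde U_j(t))$ with $\Psi_j:=F_{j,t'}^{-1}\circ F_{j,t}$ continuous, $\Psi_j(u)\ge u$, and $\Psi_{j_0}(u)>u$ for all $u>0$. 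Pick any $\uu^*\in(0,\infty)^k$ with $\no{\uu^*}_p=c$ (such a point exists for every $c>0$ and every $p$); strict coordinatewise monotonicity of $\no{\cdot}_p$ on $(0,\infty)^k$ gives $\no{\Psi(\uu^*)}_p>c$, and continuity of $\no{\cdot}_p$ and of $\Psi$ then produces an open set of positive Lebesgue measure near $\uu^*$ on which $\no{\uu}_p<c<\no{\Psi(\uu)}_p$; since the law of $\tilde U(t)$ has full support on $(0,\infty)^k$, the crossing event has positive probability. Two minor further points: your $t$-derivative $v_j[\phi(u-tv_j)-\phi(u+tv_j)]$ vanishes at $t=0$, so it is not ``strictly positive for $t\ge0$'' (harmless, since positivity on $(0,t']$ suffices for strict stochastic dominance); and $p=0$ needs no separate product formula, as $\no{\cdot}_0$ is strictly increasing in each coordinate on $(0,\infty)^k$ and the same coupling argument applies.
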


\begin{proof}[Proof of Lemma~\ref{lem:mono}]
By \cite[Theorem\ 2.2]{jog77}, the function 
$
[0,\infty)\ni t\longmapsto\PP(\no{\ZZ+t\vv}_p>c) 	
$ 
is nondecreasing; working slightly harder, one can see that this function is strictly increasing. 
The continuity follows by dominated convergence, because the intersection of the set $\{\zz\in\R^d\colon\no{\zz+\s}_p\le c\}$ with each straight line in $\R^d$ parallel to a coordinate axis is a (possibly empty) interval with endpoints continuously depending on $\s$.
\end{proof}

\begin{proof}[Proof of Corollary~\ref{cor:are schur2}]\ 
Consider the case $p\in[-\infty,2]$. Assume, to the contrary, that for some $\no\cdot_2$-unit vectors $\uu$ and $\vv$ in $\R^k$ one has $\are_{p,2,\uu}<\are_{p,2,\vv}$ while $\uu^2\preceq\vv^2$. In particular, this implies that $\are_{p,2,\vv}>0$. So, by the last sentence preceding the statement of Corollary~\ref{cor:are schur2}, the vector $\s_{p;\vv}$ exists. 

Consider first the subcase when the vector $\s_{p;\uu}$ exists as well. Then for each $r\in\{2,p\}$ and each $\ww\in\{\uu,\vv\}$ there exists a unique $t_{r,\ww}\in(0,\infty)$ such that $\s_{r,\ww}=t_{r,\ww}\ww$, 
so that $\PP(\no{\ZZ+t_{r,\ww}\ww}_r>c_r)=\be$.  
By the spherical symmetry of the distribution of $\ZZ$, one has $t_{2,\uu}=t_{2,\vv}$ or, equivalently, $\|\s_{2,\uu}\|=\|\s_{2,\vv}\|$. 
So, the assumption $\are_{p,2,\uu}<\are_{p,2,\vv}$ implies that  
$\|\s_{p,\vv}\|<\|\s_{p,\uu}\|$ or, equivalently, $t_{p,\vv}<t_{p,\uu}$. 
Therefore, by Corollary~\ref{cor:means2} and
Lemma~\ref{lem:mono},  
\begin{equation*}
\begin{aligned}
\be=\PP(\no{\ZZ+t_{p,\uu}\uu}_p>c_p)
\ge\PP(\no{\ZZ+t_{p,\uu}\vv}_p>c_p)
>\PP(\no{\ZZ+t_{p,\vv}\vv}_p>c_p)=\be, 	
\end{aligned}	
\end{equation*}
a contradiction. 

Consider now the other subcase of the case $p\in[-\infty,2]$, when the vector $\s_{p;\uu}$ does not exist. Then $\PP(\no{\ZZ+t\uu}_p>c_p)<\be$ for all $t\in(0,\infty)$ --- because, by Lemma~\ref{lem:mono}, the probability $\PP(\no{\ZZ+t\uu}_p>c_p)$ is continuous in $t$ and equals $\al$ at $t=0$. 
In particular, $\PP(\no{\ZZ+t_{p,\vv}\uu}_p>c_p)<\be$, which is a contradiction, since, again by Corollary~\ref{cor:means2}, $\PP(\no{\ZZ+t_{p,\vv}\uu}_p>c_p)
\ge\PP(\no{\ZZ+t_{p,\vv}\vv}_p>c_p)=\be$. 

The case $p\in[2,\infty]$ is similar to the case $p\in[-\infty,2]$, and even a bit simpler, because in this case the vector $\s_{p;\uu}$ always exists. 
\end{proof}

\begin{proof}[Proof of Corollary~\ref{cor:are<1}]\ 
Consider first the case $p\in[2,\infty)$. As on page~\pageref{ell_p} of the introduction, let $\|\cdot\|_p$ denote the $\ell_p$ norm in $\R^k$. 
Let $\al\downarrow0$ and $c:=k^{1/p}c_{p,\al}$, so that $\al=\PP(\|\ZZ\|_p>c)\ge\PP(|Z_1|>c)$ and hence $c\to\infty$ and $\PP(|Z_1|>c)=e^{-c^2/(2+o(1))}$, which implies $\al\ge e^{-c^2/(2+o(1))}$ and hence $c\gsim\sqrt{2\ln\frac1\al}$; we write $a\lsim b$ or, equivalently, $b\gsim a$ if $\limsup\frac ab\le1$. 
By Corollary~\ref{cor:are schur2} and \eqref{eq:one<u<e}, without loss of generality 
$\uu=\sqrt k\,\ee_1$; let then $\s=(s,0,\dots,0)\in\R^k$ stand for the corresponding vector $\s_{p;\uu}=\s_{p;\al,\be,\uu}$, which exists, since in this case $p\in[2,\infty)$ and hence $p\in[0,\infty]$. Then $s>0$ and, as $\al\downarrow0$ and $\be\uparrow1$,  
\begin{align}
	1-\be&=\PP(\|\ZZ-\s\|_p\le c) \notag\\
	&\ge\PP(|Z_1-s|\le c-1)\PP\big(|Z_2|^p+\dots+|Z_k|^p\le c^p-(c-1)^p\big) \label{eq:ge}\\
	&\gsim\PP(|Z_1-s|\le c-1)
	\sim\PP\big(Z_1\ge s-(c-1)\big)
	=e^{-(s-c)^2/(2+o(1))}; \notag
\end{align}
here, the relation $\gsim$ is due to the fact that $c\to\infty$ and hence $c^p-(c-1)^p\ge p(c-1)^p\to\infty$, whereas the last two displayed relations, $\sim$ and $=$, follow because $0\leftarrow1-\be\gsim\PP(|Z_1-s|\le c-1)$ and hence $s-(c-1)\to\infty$. 
Thus, one has $s-c\gsim\sqrt{2\ln\frac1{1-\be}}$, and so, 
\begin{equation*}
	\|\s_p\|=s\gsim\sqrt{2\ln\tfrac1\al}+\sqrt{2\ln\tfrac1{1-\be}}. 
\end{equation*}

Moreover, in the special case when $p=2$, for some constant $C(k)$ depending only on $k$ one has $\al=\PP(\|\ZZ\|_p>c)=C(k)\int_{c^2}^\infty u^{k/2-1}e^{-u/2}\dd u=e^{-c^2/(2+o(1))}$ and hence $c\sim\sqrt{2\ln\frac1\al}$; moreover, $1-\be=\PP(\|\ZZ-\s\|_p\le c) 
\le\PP(|Z_1-s|\le c)
	=e^{-(s-c)^2/(2+o(1))}$, whence 
\begin{equation*}
	\|\s_2\|\sim\sqrt{2\ln\tfrac1\al}+\sqrt{2\ln\tfrac1{1-\be}}. 
\end{equation*}
In view of \eqref{eq:a_p2}, this completes the proof for $p\in[2,\infty)$. 

The case $p=\infty$ is similar, with \eqref{eq:ge} replaced by $\PP(\|\ZZ-\s\|_\infty\le c)=\PP(|Z_1-s|\le c)\PP(\max_2^k|Z_j|\le c)$. 

It remains to consider the case $p\in[-\infty,2]$. Let $\al\downarrow0$ and $c:=c_{p,\al}$, so that $\al=\PP(\no\ZZ_p>c)\ge\PP(\min_j|Z_j|>c)=\PP(|Z_1|>c)^k=e^{-kc^2/(2+o(1))}$ and hence $c\gsim \sqrt{\frac2k\,\ln\frac1\al}$. 
By Corollary~\ref{cor:are schur2} and \eqref{eq:one<u<e}, without loss of generality 
$\uu=\one$; let then $\s=(s,\dots,s)\in\R^k$ stand for the corresponding vector $\s_{p;\uu}$, which exists by dominated convergence, 
since $\no{\zz-t\one}_p\to\infty$ as $t\to\infty$, for each $\zz\in\R^k$ and each $p\in[-\infty,\infty]$. 
Then $s>0$ and, as $\al\downarrow0$ and $\be\uparrow1$,  
\begin{multline*}
	1-\be=\PP(\no{\ZZ-\s}_p\le c) 
	\ge\PP(\no{\ZZ-\s}_\infty\le c) \\
	=\PP(|Z_1-s|\le c)^k
	\sim\PP(Z_1>s-c)^k
	=e^{-k(s-c)^2/(2+o(1))}. 
\end{multline*}
Thus, $s-c\gsim\sqrt{\frac2k\,\ln\frac1{1-\be}}$, and so, 
\begin{equation*}
	\|\s_p\|=s\sqrt k\gsim\sqrt{2\ln\tfrac1\al}+\sqrt{2\ln\tfrac1{1-\be}}. 
\end{equation*}
This completes the proof for $p\in[-\infty,2]$ as well. 
\end{proof}


\bibliographystyle{abbrv}


\bibliography{C:/Users/Iosif/Documents/mtu_home01-30-10/bib_files/citations}

\end{document}